\theoremstyle{definition}
\theoremstyle{plain}
\newtheorem{prop}{Proposition}
\newtheorem{theorem}{Theorem}
\newtheorem{corollary}{Corollary}
\newtheorem{rem}{Remark}
\def\p@subsection{}
\def\p@subsubsection{}
\definecolor{linkblue}{rgb}{0.,0.,0.5}
\begin{document}

\title{\large
A cumulant approach for the first-passage-time problem of the Feller square-root process
}

\author{Elvira Di Nardo}
\email[E-mail: ]{elvira.dinardo@unito.it}
\author{Giuseppe D'Onofrio}
\email[E-mail: ]{giuseppe.donofrio@unito.it}
\affiliation{Dipartimento di Matematica \lq G. Peano\rq, Università degli Studi di Torino, Via Carlo Alberto 10, Torino, 10123, Italy}

\begin{abstract}
The paper focuses on an approximation of the first passage time probability density function of a Feller stochastic process by using cumulants and a Laguerre-Gamma polynomial approximation. The feasibility of the method relies on closed form formulae for cumulants and moments recovered from the Laplace transform of the probability density function and using the algebra of formal power series. To improve the approximation, sufficient conditions on cumulants are stated. The resulting procedure is made easier to implement by the symbolic calculus and  a rational choice of the polynomial degree depending on skewness, kurtosis and hyperskewness. Some case-studies coming from neuronal and financial fields show the goodness of the approximation even for a low  number of terms. Open problems are addressed at the end of the paper.
\end{abstract}

\keywords{hitting times,
CIR model,
Laguerre series,
formal power series,
symbolic calculus.}


\maketitle

\section{Introduction}

One-dimensional diffusion processes play a key role in the description of fluctuating phenomena belonging to different fields of applications as physics, biology,
neuroscience, finance and others \citep{karlin1981second,oks}. In particular, the class of stochastic processes with a linear drift and
driven by a Wiener process is widely used for its mathematical tractability and flexibility.
These models are described by a stochastic differential equation of the following type
\begin{equation}
\label{LIF_pearson}
{\rm d}Y_t=\left(-\tau Y_t+\mu \right) {\rm d}t+\Sigma(Y_t) \, {\rm d}W_t, \quad Y_0=y_0,  
\end{equation}
where $\{W_t\}_{t\geq 0}$ is a standard Wiener process,  $Y_0=y_0$ is the initial condition,  $\tau >0, \mu \in {\mathbb R}$ and the volatility
$\Sigma(Y_t)>0$ are such that a strong solution of Eq.\eqref{LIF_pearson} exists (\cite{Arnold} p.105).
 \smallskip \\ \indent
The volatility $\Sigma(Y_t)$ determines the amplitude of the noise and, according to its dependence on $Y_t$, it characterizes families of stochastic processes which are solution of Eq. \eqref{LIF_pearson}.
If 
\begin{equation}
\Sigma(Y_t)=\sqrt{aY_t^2+bY_t+c}, \quad a, b, c\in\mathbb{R},
\label{variance} 
\end{equation} 
the solution of Eq. \eqref{LIF_pearson} is called Pearson diffusion process \citep{forman2008}. The coefficients $a,b$ and $c$ are such that
the square root is defined for all the values of the state space $(B_1, B_2)$  of $Y_t,$  with $-\infty \leq B_1 < y_0< B_2 \leq +\infty$.
A wide range of well-known processes belongs to this class
($\sigma > 0$): 

\begin{description}
\item[{\rm \small Ornstein-Uhlenbeck process}] $a=b=0, c=\sigma^2$ and $\Sigma(Y_t)=\sigma;$ 
\item[{\rm \small  Inhomogeneous geometric Brownian motion}]
$a=\sigma^2$, $b=c=0$ and $\Sigma(Y_t)=\sigma  Y_t;$
\item[{\rm \small Jacobi diffusion}] $a=-\sigma^2, b= \sigma^2$ and $\Sigma(Y_t)=\sigma \sqrt{Y_t(1-Y_t)+c};$  
\item[{\rm \small Feller process (CIR model)}] $b=\sigma^2 , a=0$ and $\Sigma(Y_t)=\sigma \sqrt{Y_t+c}.$ 
\end{description}

Throughout this paper we will focus on this last process for its variety of applications not only in a biological context \citep{dit2006,fel51,sac95} but also in survival analysis, in the modeling of nitrous oxide emission from soil and in other applications such as physics and computer science (see \cite{dit2006} and references therein). In the mathematical finance it is known under the name of Cox-Ingersoll-Ross model (CIR) \citep{CIR}.

While general properties of the Feller process are well known since long, less known are properties related to first-passage-time (FPT) events which are very significant phenomena in all of the above mentioned situations. In this paper we consider the  dynamics of $Y_t$ until it crosses a threshold $S$ for the first time,  the so called (upcrossing) FPT, defined as 

\begin{equation}
\label{FPT}
T:=\inf \{t \geq 0: Y_t \geq S |0< y_0< S \}.
\end{equation}

Many contributions in the literature \citep{Giorno86,Yor2003,Linetsky,  Masoliver} focus on computing the Laplace transform (LT) of the probability density function (PDF) $g(t):=g(t|y_{0},S)$ of $T$, namely

\begin{equation}
\widetilde{g}(z)=\int_{0}^{\infty}e^{-z t} \, g(t) \, {\rm d}t, \quad z > 0.
\label{LTFPT1}
\end{equation}

The reason why the literature is focused on the LT of $g(t)$, is that the PDF is usually not known analitically and neither can be obtained by direct inversion of Eq. \eqref{LTFPT1}.
Nevertheless from $\widetilde{g}$ we can compute the probability of crossing the threshold $S$, $\mathbb P(T|y_0)=\int_{0}^{\infty}g(t){\rm d}t $,
and  the mean  FPT, $\mathbb E[T]$ as follows: 
\begin{equation}\label{rel_mom_lapl}
\mathbb P(T|y_0)=\widetilde{g}(z)\Big|_{z=0} \quad
 \hbox{\rm and} \quad \mathbb E[T] =- \frac{{\rm d} \widetilde{g}(z)}{{\rm d}z}\Bigg |_{z=0}.
\end{equation}

Moments of $T$ of any orders can be computed using higher derivatives of $\widetilde{g},$ when they exist.
As it is well-known, the moments of $T$ allow nice interpretation of statistical properties of the PDF $g(t)$ and of FPT events. A different strategy to get the moments of $T$ is using the transition PDF of the process $f(y,t|y_0,\tau)=\frac{\partial}{\partial y }\mathbb P(Y_t<y|Y_{\tau}=y_0).$ Indeed if $Y_t$ admits a stationary distribution $\mathcal{W}(y):=\lim\limits_{t\to\infty} f(y,t|y_0,0)$ independent of $y_0,$ the Siegert formula \citep{Siegert1951}  allows us to compute the  moments of $T$ as

\begin{eqnarray}\label{siegert}
\mathbb{E}[T^n]=n\int_{x_0}^{S}\frac{2 {\rm d} z}{[\Sigma(Y_t)]^2 \mathcal{W}(z)}\int_{-\infty}^{z}\mathcal{W}(x)\mathbb{E}[T^{n-1}] {\rm d}x, \quad n=1,2,\ldots.
\end{eqnarray} 
 \smallskip \\ \indent

Both the depicted strategies are impractical to compute the moments of $T$ for a Feller process. Despite the closed form formula of $\widetilde{g}(z)$ (see Section $2$), the computation of higher derivatives  is awkward and some efforts have  focused in evaluating just the mean and the variance of $T$ \citep{dit2006,DOnofrio} or at most the third moment \citep{Giorno88}. In terms of computational complexity, similar difficulties apply in computing moments of $T$ through Eq. \eqref{siegert}, although the stationary distribution of $Y_t$ is known to be a shifted gamma distribution (see Section 2). As the distribution of $T$ is often unavailable, simulations of the paths through Monte Carlo methods are still an efficient tool to get manageable estimations of $g(t),$ useful to analyze especially asymptotic properties. One more strategy consists in writing the FPT distribution as a Sturm-Liouville eigenfunction expansion series, first given for the Feller process in \cite{Linetsky}, using the classical argument of \cite{Kent1980} and \cite{Kent82}. Although this strategy provides an expression for the FPT density, information on the moments of $T$ can be obtained only numerically and refers exclusively to diffusion processes without natural boundaries.
A discussion on FPT of the Feller process in the presence of entrance, exit and reflecting boundary at the origin is given in \cite{Martin}, solving the Sturm-Liouville boundary problem in the case $\tau=0$.
 \smallskip \\ \indent

The goal of this paper is twofold: to give closed form formulae for the cumulants of $T$ of any order for the Feller process regardless of the nature of the boundaries and to give approximations of $g(t)$ by using moments recovered from cumulants. 
 \smallskip \\ \indent

Recall that if $T$ has moment generating function $\mathbb E[e^{zT}] < \infty$  for all $z$ in an open interval about $0,$ then its cumulants $\{c_k(T)\}_{k \geq 0}$ are such that 

\begin{equation}
\sum_{k \geq 1} c_k(T) 
\frac{z^k}{k!}  = \log \mathbb E[e^{zT}] 
\label{defcum}
\end{equation}
for all $z$ in some (possibly smaller) open interval about $0.$
Cumulants have nice properties compared with moments such as the semi-invariance and the additivity \citep{McCullagh}. Further properties on cumulants are given in Section 3.  Overdispersion and underdispersion as well as asymmetry and tailedness of the FPT PDF might be analized through the first four cumulants. Examples on how to employ the first four cumulants in the estimation of the parameters of a model fitted to data is given in \citep{Antunes, Seneta}.  
 \smallskip \\ \indent

The employment of cumulants in the FPT literature is not new \citep{Ramos}. However, their application has been limited to few cases and not in the direction addressed in this paper. Here, the idea to use cumulants essentially relies on the form of $\widetilde{g}(z)$ for the Feller process. Indeed $\widetilde{g}(z)$ is the ratio of two power series whose algebra is simplified if we consider $\log \widetilde{g}(z).$  We take advantage of the formal power series algebra \citep{EC} to give first
a closed form expression of $\{c_k(T)\}$ and then to recover moments. In Section 4, we propose to use a Laguerre series to approximate the PDF $g(t)$ taking into account the properties of $\widetilde{g}(z).$ To the best of our knowledge, this approach in evaluating the FPT PDF of the Feller process has not been investigate before in the literature.  Such an approximation works if moments (or cumulants) of $T$ are known and gives better results when the series is of Laguerre-Fourier type. As the PDF $g(t)$ is unknown, 
we give sufficient conditions on the cumulants of $T$ to guarantee the approximation with the Laguerre-Fourier series. We show how to take advantage of the formal power series algebra and of the symbolic calculus \citep{DiNardo} in implementing the proposed procedure. 
Some new results on the Kummer's function are also given.

 Then we apply our method to different case-studies inspired by 
 neuronal and financial models. One of the advantages of the method is that few terms are sufficient to have a good description of $g(t)$
 and the complexity of the overall computation is strongly reduced.
Statistical arguments motivate the choice of stopping the Laguerre series at the fifth term. The case-studies show that the resulting approximation is accurate also when the sufficient conditions are not completely fulfilled. A discussion section ends the paper, addressing future research and open problems.

\section{\label{sec:level2}The Feller process and the FPT problem}
We consider model \eqref{LIF_pearson} such that the function $\Sigma(\cdot)$ depends on the process itself and on $c \leq 0$. 
The Feller process investigated here is 
 given by 
\begin{equation}\label{LIF}
{\rm d} Y_t=\left( -\tau Y_t+\mu\right) {\rm d} t+\sigma\sqrt{Y_t-c} \,\, {\rm d}W(t).
\end{equation}
The state space of the process is the interval $(c,+\infty)$. 
The endpoints $c$ and $\infty$ can or cannot be reached in a finite time depending on the underlying parameters. According to the Feller classification of boundaries  \citep{karlin1981second}, $c$ is  an entrance boundary if it cannot be reached by $Y_t$ in finite  time, and there is no probability flow to the outside of the interval $(c,+\infty)$, that is, the process stays in $[c, +\infty)$ with probability 1. In particular, set $s:=2 (\mu-c \tau) /\sigma^2$. Then  $c$ is an entrance boundary  if $s\geq 1$.
\smallskip \indent \\ \indent
In the absence of a threshold, the Feller process admits a stationary distribution which is a shifted gamma distribution with the following shape, scale and location parameters
\begin{equation}\label{yinf_F}
Y_\infty \sim {\mbox{Gamma}}\left(s,
\frac{1}{2}\frac{\sigma^2}{\tau}, 
c\right).
\end{equation}
Let $Y_t$ evolve in the presence of a threshold $S$. 
Let $T$ be the FPT random variable of $Y_t$ through $S$ defined in Eq. \eqref{FPT}. Three distinct situations for the FPT can occur. Indeed, the process is said to be in the suprathreshold, subthreshold and threshold regimes if $\mathbb{E}[Y_\infty]>S, \mathbb{E}[Y_\infty]<S$ and $\mathbb{E}[Y_\infty]=S$, respectively, where the asymptotic mean of $Y_t$ is
\begin{eqnarray}\label{meanLIF_as}
\mathbb E[Y_{\infty}]=\lim_{t\rightarrow +\infty}\mathbb E[Y_t|y_0]=\frac{\mu}{\tau}.
\end{eqnarray}

The Siegert equation \citep{Masoliver}
\begin{eqnarray}\label{siegertdiff}
\frac{1}{2}\sigma^2(y_0-c)\frac{\partial^2 \widetilde{g}(z)}{\partial y_0^2}-\left(\tau y_0 + \mu \right)
\frac{\partial \widetilde{g}(z)}{\partial y_0}
-z \widetilde{g}(z)=0
\end{eqnarray}
with initial conditions $\widetilde{g}(z)=1$ if $y_0\equiv S$ and $\widetilde{g}(z)<+\infty$ for any $y_0$, provides the LT of the FPT PDF.
Indeed the solution of Eq. \eqref{siegertdiff} is
\begin{equation}
\label{Lapl_bo2}
\widetilde{g}(z)=\frac{\Phi\left(\frac{z}{\tau},
s,
\frac{2\tau(y_0-c)}{\sigma^2}\right)}
{\Phi\left(\frac{z}{\tau},
s,
\frac{2\tau(S-c)}{\sigma^2}\right)}, \quad z>0
\end{equation}
where $\Phi$ is the confluent hypergeometric function of the first kind (or Kummer's function) $\Phi(a,b,z)= {}_1F_1(a;b;z)$,
and 
\begin{equation}\label{hypergeo}
{}_pF_q(a_1, \ldots, a_p; b_1, \ldots, b_q;z):=\sum_{n\geq 0}\frac{ \langle a_1\rangle_n \cdots  \langle a_p\rangle_n}{ \langle b_1\rangle_n \cdots  \langle b_q \rangle_n}\frac{z^n}{n!}
\end{equation} 
is the generalized hypergeometric function, with $\langle a \rangle_n=a(a+1)\cdots (a+n-1), n\in \mathbb N$  the rising factorial and $\langle a \rangle_0=1$. For more details on Eqs. \eqref{yinf_F}-\eqref{Lapl_bo2} see \cite{DOnofrio}. 
In particular, the mean of $T$ is  \citep{Giorno88} 
\begin{eqnarray}
\label{meanFPT_F}
\mathbb E[T]=\frac{(S-y_0)}{\mu-\tau c}+\frac{1}{\tau} \sum_{n \geq 2} \frac{s^n \Gamma(s)}{n \Gamma(s+n)}
\frac{[(S-c)^n-(y_0-c)^n]}{(\frac{\mu}{\tau}-c)^n},      
\end{eqnarray} 
where ${\displaystyle \Gamma (z)=\int _{0}^{\infty }x^{z-1}e^{-x}\,{\rm d} x}$ is the gamma function.
 
\section{FPT cumulants}\label{s:methods}
Suppose $\widetilde{g}$ a formal power series  \citep{EC}
\begin{equation}
 \widetilde{g}(z)=\sum_{k \geq 0} \widetilde{g}_k \frac{z^k}{k!} \in {\mathbb R}[[z]]
 \label{fps}
\end{equation}
where ${\mathbb R}[[z]]$ denotes the ring of formal power series with coefficients in ${\mathbb R}.$ Then $\log \widetilde{g}(z)$ is  well defined 
\begin{equation}
\log \widetilde{g}(z)=\sum_{k \geq 1} c_k  \frac{z^k}{k!}
\label{cgf}
\end{equation}
and the coefficients $\{c_k\}_{k \geq 1}$ are named {\it formal cumulants} of $\{\widetilde{g}_k\}_{k \geq 0}.$ There are different formulae expressing formal cumulants in terms of $\{\widetilde{g}_k\}_{k \geq 0}$,  \cite{DiNardo}. Here we use the logarithmic (partition) polynomials $\{P_k\}$ such that
\begin{equation}
c_k = P_k(\widetilde{g}_1, \ldots, \widetilde{g}_{k}), \,\, k \geq 1,
\label{cummombis}
\end{equation}
 where
\begin{equation}
P_k(x_1, \ldots, x_{k}) = \sum_{j=1}^k (-1)^{j-1} (j-1)! B_{k,j}(x_1, \ldots, x_{k-j+1})
\label{logpol}
\end{equation}
and $\{B_{k,j}\}$ are the partial exponential Bell polynomials  \citep{EC}. Let us recall that, for a fixed positive integer $k$ and $j=1, \ldots,k,$ the $j$-th partial exponential Bell polynomial  in the variables  $x_1, x_2, \ldots, x_{k-j+1}$ is a homogeneous polynomial of degree $j$ given by
\begin{equation}
B_{k,j}(x_1,  \ldots, x_{k-j+1}) =  \sum \frac{k!}{\lambda_1! \lambda_2! \cdots  \lambda_{k-j+1}!} \prod_{i=1}^{k-j+1} \left(\frac{x_i}{i!}\right)^{\lambda_i}
\label{(parexpBell)}
\end{equation}
where the sum is taken over all sequences $\lambda_1, \lambda_2, \ldots, \lambda_{k-j+1}$ of non negative integers  such that 
\begin{equation}
\lambda_1 + 2 \lambda_2  + \cdots + (k-j+1) \lambda_{k-j+1}= k, \qquad \lambda_1 + \lambda_2 + \cdots + \lambda_{k-j+1}  = j.
\end{equation}
The $k$-th logarithmic polynomial \eqref{logpol} is a special case of the $k$-th general partition polynomial  
\begin{equation}
G_k( a_1, \ldots, a_k; x_1, \ldots, x_{k}) = \sum_{j=1}^k a_j B_{k,j}(x_1, \ldots, x_{k-j+1}), \,\, k \geq 1
\label{gpp}
\end{equation}
when $a_j=(-1)^{j-1} (j-1)!$ for $j \geq 1.$ The first five general partition polynomials $\{G_k\}_{k=1}^5$ are given in Table \ref{table1}. 
\begin{table}[ht]
\caption{General partition polynomials }  
\centering  
\begin{tabular}{c|l}  
\hline\hline  
$k\, $ & $\,\,G_k(a_1, \ldots, a_k; x_1, \ldots, x_{k})  $ \\ [0.5ex]  
\hline  
$1\, $ &  $\,\,a_1 x_1$ \\  
$2 \, $ &  $\,\,a_1 x_2 + a_2 x_1^2$ \\
$3 \, $ &  $\,\,a_1 x_3 + 3 a_2  x_2 x_1 + a_3 x_1^3$ \\
$4 \, $ &  $\,\,a_1 x_4 + 4 a_2 x_3 x_1 + 6 a_3 x_2 x^2_1 + a_4 x_1^4 + 3 a_2 x_2^2$ \\
$5 \, $ &  $\,\,a_1 x_5 + 5 a_2  x_4 x_1 + 10 a_ 2 x_3 x_2 + 10 a_3 x_3 x_1^2 + 15 a_3 x_2^2 x_1 + 10 a_4 x_2 x_1^3 + a_5 x_1^5$ \\ [1ex]  
\hline  
\end{tabular}
\label{table1}  
\end{table}
\smallskip \\
If we set $y_k=P_k(x_1, \ldots, x_{k})$ for $k \geq 1,$ then 
\begin{equation}
x_k=Y_k(y_1, \ldots, y_k) = \sum_{j=1}^k B_{k,j}(y_1, \ldots, y_{k-j+1}), \,\, k \geq 1
\label{(comexpBell)}
\end{equation}
are the inverse relations, with $\{B_{k,j}\}$ given in Eq. \eqref{(parexpBell)}. The polynomial $Y_k$ is the $k$-th complete  Bell (exponential) polynomial and is a special case of $G_k$  in Eq. \eqref{gpp} when $a_j=1$ for $j \geq 1.$ \\ \indent
The logarithmic and the complete Bell polynomials allow us to deal with moments and cumulants of $T$. Indeed if $\widetilde{g}$ is the Laplace transform of the PDF $g(t)$ and the rhs of  Eq. \eqref{fps} is its Taylor expansion about $0,$ then $\widetilde{g}_0=1,$ 
\begin{equation}
\widetilde{g}_k=(-1)^k \mathbb E[T^k], \,\, \hbox{$k \geq 1$}
\label{mom1}
\end{equation} 
and there exist cumulants of any order $\{c_k(T)\},$ see for instance \cite{Abate96}.  In particular, from Eq. \eqref{cgf} and Eq. \eqref{cummombis}, we have 
\begin{equation}
c_k =  (-1)^k  P_k(\mathbb E[T], \ldots, \mathbb E[T^k]) = (-1)^k c_k(T), \,\, \hbox{$k \geq 1$}.
\label{cummomter}
\end{equation}
Vice-versa, if cumulants $\{c_k(T)\}$ are known, moments of $T$ might be computed by using the inverse relations \eqref{(comexpBell)} 
\begin{equation}
\mathbb E[T^k] =  Y_k(c_1[T], \ldots, c_k[T]), \,\, 
\hbox{$k \geq 1$}
\label{momcum}
\end{equation}
or the well-known recursion formula  \citep{DiNardo2006}
\begin{equation}
\mathbb E[T^k] = c_k(T) + \sum_{i=1}^{k-1} \binom{k-1}{i-1}
c_{i}(T) \mathbb E[T^{k-i}].
\label{recumom}
\end{equation}

If $T$ is the FPT random variable of a Feller process modeled by Eq. \eqref{LIF}, the following theorem gives the closed-form expression of the $k$-th cumulant  for any order $k\geq 1.$ 
\begin{theorem} \label{Thm1}
The $k$-th FPT cumulant for the Feller process in Eq. \eqref{LIF} is
\begin{equation}
\!\!\!c_k(T)\!=\! \left(- \frac{1}{\tau} \right)^k  [c^*_k (y_0) -  c^*_k(S)]
\label{ck}
\end{equation}
where 
\begin{equation}
c^*_k (w) = P_k \left[ h_1\left(\frac{2\tau(w-c)}{\sigma^2}\right), h_2\left(\frac{2\tau(w-c)}{\sigma^2}\right), \ldots, h_k\left(\frac{2\tau(w-c)}{\sigma^2}\right)\right],
\label{cumF}
\end{equation}
$P_k$ is the $k$-th logarithmic polynomial \eqref{logpol} and
\begin{equation}
h_j(y) = j! \sum_{n \geq j} \left[{n\atop j}\right] \frac{y^n}{n! \langle s\rangle_n}, \,\,\,\, j=1,2,\ldots,k, \label{hj}
\end{equation}
with $\left[{n\atop j}\right]$ the unsigned Stirling numbers of first type
 and $\langle \cdot \rangle_n$ the $n$-th rising factorial.
\end{theorem}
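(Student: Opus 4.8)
The plan is to obtain the cumulants directly from $\log\widetilde g(z)$, exploiting that the logarithm converts the ratio in \eqref{Lapl_bo2} into a difference. Writing $a=z/\tau$ and $\xi_w=2\tau(w-c)/\sigma^2$, the opening move is the identity
\[
\log\widetilde g(z)=\log\Phi(a,s,\xi_{y_0})-\log\Phi(a,s,\xi_{S}),
\]
so that everything reduces to expanding $\log\Phi(a,s,y)$ as a power series in $a$ for a generic second argument $y=\xi_w$ and then subtracting the two instances $w=y_0$ and $w=S$. This is precisely the additivity of formal cumulants under the factorisation of a (formal) moment generating function.

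The central step is to recast Kummer's function as an exponential generating function in the variable $a$. Starting from \eqref{hypergeo}, one has $\Phi(a,s,y)=\sum_{n\geq 0}\tfrac{\langle a\rangle_n}{\langle s\rangle_n}\tfrac{y^n}{n!}$; I would then expand each rising factorial through the unsigned Stirling numbers of the first kind, $\langle a\rangle_n=\sum_{j=0}^{n}\left[{n\atop j}\right]a^{j}$, and interchange the order of summation, which is legitimate in $\mathbb R[[a]]$ (or by absolute convergence of the double series). Collecting powers of $a$ and using $\left[{n\atop 0}\right]=0$ for $n\geq 1$, the constant term equals $1$ while the coefficient of $a^{j}/j!$ for $j\geq 1$ is exactly the quantity $h_j(y)$ of \eqref{hj}. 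Setting $h_0(y)=1$ this yields
\[
\Phi(a,s,y)=\sum_{j\geq 0}h_j(y)\,\frac{a^{j}}{j!},
\]
a formal power series with constant term $1$, so that $\log\Phi(a,s,y)$ is well defined.

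At this point the formal-cumulant machinery introduced above applies verbatim, with the numbers $h_j(y)$ in the role of the coefficients $\widetilde g_k$. By \eqref{cgf} and \eqref{cummombis}, the formal cumulants of $\{h_j(y)\}$ are $P_k\bigl(h_1(y),\ldots,h_k(y)\bigr)=c^*_k(w)$, matching \eqref{cumF}, so that $\log\Phi(a,s,\xi_w)=\sum_{k\geq 1}c^*_k(w)\,a^{k}/k!$. Substituting $a=z/\tau$, taking the difference and comparing with \eqref{cgf} gives the formal cumulants $c_k=\tau^{-k}[c^*_k(y_0)-c^*_k(S)]$ of $\widetilde g$; the sign relation \eqref{cummomter}, $c_k(T)=(-1)^k c_k$, then produces $c_k(T)=(-1/\tau)^k[c^*_k(y_0)-c^*_k(S)]$, which is \eqref{ck}.

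The main obstacle is the second step: justifying the reordering of the double sum and verifying that the resulting coefficient of $a^{j}$ is precisely the Stirling-number series appearing in \eqref{hj}, rather than merely some equivalent expression. Once $\Phi$ is rewritten as an exponential generating function in $a$, the remainder is a direct application of the logarithmic polynomials and of the additivity of formal cumulants. As a consistency check one can verify that the case $k=1$, where $c^*_1(w)=h_1(\xi_w)$, reproduces the known mean \eqref{meanFPT_F}.
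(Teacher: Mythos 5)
Your proposal is correct and follows essentially the same route as the paper's own proof: expanding the rising factorials $\langle z/\tau\rangle_n$ via unsigned Stirling numbers of the first kind to rewrite Kummer's function as an exponential generating function in $z/\tau$ with coefficients $h_j(y)$, applying the logarithmic polynomials to each factor, and using $\log$ of the ratio in Eq.~\eqref{Lapl_bo2} as a difference together with the sign relation \eqref{cummomter}. No meaningful differences to report.
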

\begin{rem}{\rm 
Note that \citep{EC}
\begin{equation}
\left[{n\atop j}\right]  = B_{n,j}\big(0!, 1!, \ldots,
(n-j+1)!\big), \,\, n \geq j
\label{unsigned}
\end{equation}
with $\{B_{n,j}\}$ the partial exponential Bell polynomials given in Eq. \eqref{(parexpBell)}.}
\end{rem}
\begin{proof}
In Eq. \eqref{Lapl_bo2}, set  $A = 2 \tau (y_0-c)/\sigma^2$ and $B = 2 \tau (S-c)/\sigma^2.$ From Eq. \eqref{cummomter} we get
\begin{equation}
\log \widetilde{g}(z) = \log \frac{{}_1F_1(\frac{z}{\tau};s;A)}{{}_1F_1(\frac{z}{\tau};s;B)} =\sum_{k \geq 1} (-1)^k c_k(T) \frac{z^k}{k!},
\label{proof1}
\end{equation}
where  
\begin{equation}
 {}_1F_1\left(\frac{z}{\tau};s;y\right)= \sum_{n \geq 0} \frac{\langle \frac{z}{\tau} \rangle_n}{\langle s \rangle_n}\frac{y^n}{n!}, \,\, y=A,B.
\label{log_iper}
\end{equation}
To expand the rhs of Eq. \eqref{log_iper} in formal power series in $z$, observe that 
\begin{equation}
\left\langle \frac{z}{\tau} \right\rangle_n=\sum_{j=0}^{n} \left[{n\atop j}\right] \frac{z^j}{\tau^j}
\label{stirling1}
\end{equation}
where $\left[{n\atop j}\right]$ are the unsigned Stirling numbers of the first type. Replacing Eq. \eqref{stirling1} in 
Eq. \eqref{log_iper}, after some algebra, we get
\begin{equation}
{}_1F_1\left(\frac{z}{\tau};s;y\right) = 1+\sum_{k\geq 1} 
\frac{z^k}{\tau^k} \left( \sum_{n \geq k} \left[{n\atop k}\right] \frac{y^n}{\langle s \rangle_n n!}\right).
\label{Hypergeom}
\end{equation}
From Eqs. \eqref{cgf} and \eqref{cummombis}, we get 
\begin{equation}
\log {}_1F_1\left(\frac{z}{\tau};s;y\right)  = \sum_{k \geq 1}
\frac{P_k[h_1(y), \ldots,h_k(y)]}{k!} \frac{z^k}{\tau^k} ,
\end{equation}
where $P_k$ is the $k$-th logarithmic polynomial given in Eq. \eqref{logpol} and $h_j(y)$ is given in Eq. \eqref{hj}.
Moreover Eq. \eqref{ck} follows 
taking into account Eq. \eqref{proof1} and by observing that
\begin{eqnarray*}
\!\!\!\!\!\!\!\!\!\!\! & & \log \frac{{}_1F_1\left(\frac{z}{\tau};s;A\right)}{{}_1F_1\left(\frac{z}{\tau};s;B\right)} =  \log {}_1F_1\left(\frac{z}{\tau};s;A\right) - \log {}_1F_1\left(\frac{z}{\tau};s;B\right) \nonumber \\
\!\!\!\!& = & \sum_{k \geq 1} \left(\frac{ P_k[ h_1(A),  \ldots, h_k(A)] -  P_k[ h_1(B), \ldots, h_k(B)]}{\tau^k} \frac{z^k}{k!} \right). \label{diff} 
\end{eqnarray*}
\end{proof} 
\begin{corollary}  The mean FPT  and the variance of $T$  are respectively
\begin{equation}
c_i(T) = \left( -\frac{1}{\tau} \right)^i \sum_{n \geq i} a_{i,n}  
\left( \frac{2 \tau}{\sigma^2} \right)^n \left[ (y_0-c)^n - (S-c)^n \right], \,\, i=1,2 \label{cum_12}
\end{equation}
where for $n \geq i$
\begin{equation}
a_{i,n} = \left\{ \begin{array}{lc}
\displaystyle{\frac{n^{-1}}{\, \langle s \rangle_n}}, & i=1 \\
\displaystyle{2 \frac{n^{-1} H_{n-1}}{\, \langle s \rangle_n} -
\sum_{k=1}^{n-1} \frac{k^{-1}}{\langle s \rangle_k}
\frac{(n-k)^{-1}}{\langle s \rangle_{n-k}}}, & i = 2 
\end{array} \right.
\end{equation}
with $H_{n-1}=\sum_{j=1}^{n-1} j^{-1}$ the harmonic numbers. 
\end{corollary}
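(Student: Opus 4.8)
The plan is to specialize Theorem~\ref{Thm1} to the two lowest orders $k=1,2$, where the logarithmic polynomials $P_k$ collapse to very short expressions, and then to expand $c^*_i(w)$ as an explicit power series in $y=2\tau(w-c)/\sigma^2$ and read off the coefficient of each $y^n$. Reading $P_k$ from Table~\ref{table1} with $a_j=(-1)^{j-1}(j-1)!$ gives $P_1(x_1)=x_1$ and $P_2(x_1,x_2)=x_2-x_1^2$, so that by Eq.~\eqref{cumF},
\begin{equation*}
c^*_1(w)=h_1(y), \qquad c^*_2(w)=h_2(y)-h_1(y)^2.
\end{equation*}

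The two inputs I would need are the closed forms of the unsigned Stirling numbers $\left[{n\atop 1}\right]$ and $\left[{n\atop 2}\right]$ occurring in Eq.~\eqref{hj}. Both are classical and follow by extracting the coefficients of $z$ and $z^2$ in $\langle z\rangle_n=z\prod_{i=1}^{n-1}(z+i)$ (cf.\ Eq.~\eqref{stirling1}): one obtains $\left[{n\atop 1}\right]=(n-1)!$ and $\left[{n\atop 2}\right]=(n-1)!\,H_{n-1}$, with $H_{n-1}=\sum_{j=1}^{n-1}j^{-1}$. Substituting into Eq.~\eqref{hj} yields
\begin{equation*}
h_1(y)=\sum_{n\geq 1}\frac{y^n}{n\,\langle s\rangle_n}, \qquad
h_2(y)=2\sum_{n\geq 2}\frac{H_{n-1}}{n}\frac{y^n}{\langle s\rangle_n}.
\end{equation*}

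For $i=1$ the coefficient of $y^n$ in $h_1(y)$ is $n^{-1}/\langle s\rangle_n=a_{1,n}$, so the claim is immediate. For $i=2$ I would compute $h_1(y)^2$ as the Cauchy product of $h_1$ with itself; grouping terms with $k+(n-k)=n$ for $k=1,\dots,n-1$ gives the coefficient of $y^n$ as $\sum_{k=1}^{n-1} k^{-1}\langle s\rangle_k^{-1}(n-k)^{-1}\langle s\rangle_{n-k}^{-1}$. Subtracting this from the coefficient $2n^{-1}H_{n-1}/\langle s\rangle_n$ of $y^n$ in $h_2(y)$ reproduces exactly $a_{2,n}$. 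Finally, putting $y=2\tau(w-c)/\sigma^2$, so that $y^n=(2\tau/\sigma^2)^n(w-c)^n$, and assembling the difference $c^*_i(y_0)-c^*_i(S)$ with the prefactor $(-1/\tau)^i$ of Eq.~\eqref{ck}, delivers Eq.~\eqref{cum_12}.

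The computation is essentially bookkeeping once the two Stirling evaluations are in place; the only step requiring genuine care is the convolution producing $h_1(y)^2$, where the summation range $k=1,\dots,n-1$ and the pairing of rising factorials $\langle s\rangle_k\langle s\rangle_{n-k}$ must be tracked correctly so that the $i=2$ coefficient emerges in the stated form. I do not anticipate any conceptual obstacle beyond this indexing.
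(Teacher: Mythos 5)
Your proposal is correct and follows essentially the same route as the paper's own proof: specializing Theorem~\ref{Thm1} to $k=1,2$, invoking the evaluations $\left[{n\atop 1}\right]=(n-1)!$ and $\left[{n\atop 2}\right]=(n-1)!\,H_{n-1}$, and computing $h_1(y)^2$ as a Cauchy product to extract the coefficient of $y^n$. The only difference is presentational — you spell out $P_2(x_1,x_2)=x_2-x_1^2$ and the explicit series for $h_1,h_2$, which the paper leaves implicit.
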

\begin{proof}
The mean FPT  is obtained choosing $k=1$ in Eq. \eqref{ck} and observing that $\left[{n\atop 1}\right]=(n-1)!$ for $n \geq 1.$
The variance of the FPT  is obtained choosing $k=2$ in Eq. \eqref{ck}, 
 observing that $\left[{n\atop 2}\right]=(n-1)! H_{n-1}$ 
for $n \geq 2$ with $H_n$ the $n$-th harmonic number and
\begin{equation}
h_1(y)^2 = \sum_{n \geq 2} \left(\sum_{k=1}^{n-1}  a_{1,k} a_{1,n-k} \right) y^n.
\end{equation}
\end{proof}
\begin{rem}{\rm 
Observe that Eq. \eqref{cum_12} gives $\mathbb E[T]$ for $i=1$ and coincides with the expression \eqref{meanFPT_F} of the first moment of $T$. 
The comparison follows easily from the definition of $s$ and the property  $\langle s \rangle_n=\Gamma(s+n)/\Gamma(s).$}
\end{rem}
\begin{corollary}\label{cor_2}
 If $\{c_k(T)\}$ is the FPT cumulant sequence,
then 
\begin{equation}\label{mom}
\mathbb E[T^k] = \frac{(-1)^k}{\tau^k} \sum_{i=0}^k \binom{k}{i}  Y_{k-i}[c^*_1(y_0), \ldots, c^*_{k-i}(y_0)] Y_{i}[-c^*_1(S), \ldots,-c^*_i(S)]
\end{equation}
where $\{Y_i\}$ are the complete Bell polynomials given in Eq. \eqref{(comexpBell)} and $Y_0=1.$ 
\end{corollary}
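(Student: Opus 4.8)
The plan is to read $\mathbb{E}[T^k]$ directly off the moment-from-cumulant relation of Eq.~\eqref{momcum}, namely $\mathbb{E}[T^k] = Y_k(c_1(T), \ldots, c_k(T))$, and then to insert the closed form of the cumulants furnished by Theorem~\ref{Thm1}. By Eq.~\eqref{ck} each argument factorizes as $c_j(T) = (-1/\tau)^j\,[c^*_j(y_0) - c^*_j(S)]$, so the entire computation reduces to two elementary structural properties of the complete Bell polynomials $\{Y_k\}$, both of which I would extract from their exponential generating function
\[
\sum_{k \geq 0} Y_k(x_1, \ldots, x_k)\,\frac{z^k}{k!} = \exp\!\left(\sum_{j \geq 1} x_j\,\frac{z^j}{j!}\right).
\]

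First I would use homogeneity (isobaricity): substituting $z \mapsto a z$ in the generating function shows $Y_k(a x_1, a^2 x_2, \ldots, a^k x_k) = a^k\,Y_k(x_1, \ldots, x_k)$. Applied with $a = -1/\tau$ and $x_j = c^*_j(y_0) - c^*_j(S)$, this pulls the scalar prefactor out cleanly, giving $\mathbb{E}[T^k] = \frac{(-1)^k}{\tau^k}\,Y_k\!\left(c^*_1(y_0) - c^*_1(S), \ldots, c^*_k(y_0) - c^*_k(S)\right)$, which already matches the $(-1)^k/\tau^k$ in Eq.~\eqref{mom}. Next I would use additivity: since the generating function is multiplicative, $\exp(\sum_j (u_j + v_j) z^j/j!)$ factors as a product of two such series, yielding the binomial convolution $Y_k(u_1 + v_1, \ldots, u_k + v_k) = \sum_{i=0}^k \binom{k}{i} Y_{k-i}(u_1, \ldots, u_{k-i})\,Y_i(v_1, \ldots, v_i)$. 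Taking $u_j = c^*_j(y_0)$ and $v_j = -c^*_j(S)$ turns the single Bell polynomial into precisely the double sum of Eq.~\eqref{mom}, with the endpoint terms $i = 0$ and $i = k$ absorbed by the convention $Y_0 = 1$.

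The result is essentially forced once Theorem~\ref{Thm1} is available, so there is no genuine obstacle beyond careful bookkeeping. The one point I would be attentive to is that the two generating-function identities should be stated at the level of formal power series in $\mathbb{R}[[z]]$, so that the rescaling $z \mapsto a z$ and the product expansion are legitimate coefficient by coefficient; I would also check that the argument lengths align, with $Y_{k-i}$ fed $c^*_1(y_0), \ldots, c^*_{k-i}(y_0)$ and $Y_i$ fed $-c^*_1(S), \ldots, -c^*_i(S)$, which is automatic because $Y_m$ depends only on its first $m$ arguments.

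As a cross-check, the same identity follows by working one level lower, directly from the proof of Theorem~\ref{Thm1}: from $\log\widetilde{g}(z) = \log {}_1F_1(z/\tau;s;A) - \log {}_1F_1(z/\tau;s;B)$ one writes $\widetilde{g}(z)$ as the product of the exponential generating function of ${}_1F_1(z/\tau;s;A)$ and the reciprocal of that of ${}_1F_1(z/\tau;s;B)$, whose coefficients are the same complete Bell polynomials of $c^*_k(y_0)/\tau^k$ and $-c^*_k(S)/\tau^k$ respectively, and then reads off the coefficient via $\widetilde{g}_k = (-1)^k \mathbb{E}[T^k]$ from Eq.~\eqref{mom1}. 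This is the Bell-polynomial argument rephrased as a product of series, and I would present whichever version is shorter.
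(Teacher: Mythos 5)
Your proposal is correct and follows essentially the same route as the paper: both start from Eq.~\eqref{momcum}, substitute the cumulants from Theorem~\ref{Thm1}, pull out the factor $(-1/\tau)^k$ via the homogeneity $Y_k(a y_1, a^2 y_2, \ldots, a^k y_k) = a^k Y_k(y_1, \ldots, y_k)$, and conclude with the binomial-type (convolution) property of the complete Bell polynomials. The only difference is that you derive these two properties from the exponential generating function while the paper simply invokes them, so no further comment is needed.
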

\begin{proof}
Since $Y_k(a y_1, a^2 y_2, \ldots, a^k y_k) = a^k
Y_k(y_1, y_2, \ldots, y_k), a \in {\mathbb R},$ from 
Eq. \eqref{momcum} and Eq. \eqref{ck}, we get
\begin{equation}
\mathbb E[T^k] = \frac{(-1)^k}{\tau^k} Y_k[c^*_1 (y_0)-  c^*_1(S), \ldots, c^*_k (y_0)-  c^*_k(S)].
\end{equation}
Applying the binomial type property of the complete Bell polynomials, we have
\begin{eqnarray}
 & &\!\!\!\!\!\!\!\!\!\! Y_k[c^*_1 (y_0) -  c^*_1(S), c^*_2 (y_0) -  c^*_2(S), \ldots, c^*_k (y_0)- c^*_k(S)]  \nonumber \\
&= &  \sum_{i=0}^k \binom{k}{i} Y_{k-i}[c^*_1 (y_0), \ldots, c^*_{k-i}(y_0)] Y_{i}[-c^*_1(S), \ldots,-c^*_i(S)]
\label{binom}
\end{eqnarray}
and the result follows.
\end{proof}
Note that 
\begin{equation}\label{mom1}
Y_{i}[-y_1, \ldots, -y_i]  = \sum_{j=1}^i (-1)^j B_{i,j}[y_1, \ldots, y_{i+j-1}], \,\, i\geq 1
\end{equation}
since $B_{i,j}(a y_1, \ldots, a y_{i+j-1})=a^j B_{i,j}( y_1, \ldots,  y_{i+j-1}), a \in {\mathbb R}$ from Eq. \eqref{(parexpBell)}.
\subsection{{\small Computing FPT cumulants}}
For the subsequent applications of Theorem \ref{Thm1}, we add some remarks  on the efficiency of the implementation of Eq. \eqref{ck}. The logarithmic partition polynomials $\{P_k\},$
with $P_1(x_1)=x_1,$ might be generated by using the recurrence relation \citep{EC}
\begin{equation}
P_k(x_1, \ldots, x_k) = x_k - \sum_{r=1}^{k-1} \binom{k-1}{r} x_r
P_{k-r}(x_1, \ldots, x_{k-r}), \,\, k \geq 2.
\end{equation}
About the computation of $\{h_k(y)\}$ in Eq. \eqref{hj},
from Eq. \eqref{Hypergeom}, note that
\begin{equation}
h_k(y)=\left. \frac{{\rm \partial}^k}{{\rm \partial} u^k} \, {}_1\!F_1\left(u;s;y\right)\right\vert_{u=0}, \,\, k \geq 1.
\label{(aaa)}
\end{equation}
Derivatives of the Kummer's function with respect to the parameter $u$ have been computed in \cite{Ancarani}. The special case $u=0$ is given in terms of generalized Kamp\'e de F\'eriet-like hypergeometric functions.
An algorithm for the computation of the $k$-th derivative of the Kummer's function is given in \cite{Abad}. Here, we propose to use a standard implementation of the series in Eq. \eqref{hj}
involving the unsigned Stirling number of first type, as procedures implementing
the well-known triangular recurrence relation \citep{EC}
\begin{equation}
\left[{n+1}\atop{j}\right] = \left[ {n}\atop{j-1}\right] + n \left[{n}\atop{j}\right], \,\, j=1, \ldots, n+1, \, n \geq 0 \label{recuBnk}
\end{equation}
are available in many classical technical computing systems as {\tt Mathematica} or {\tt R}. 

From Eq. \eqref{(aaa)} it turns out that the usefulness of expression \eqref{hj} for the functions $h_j$ is twofold. It also constitutes an alternative way to express the derivative in Eq. \eqref{(aaa)} and so it can simplify the form of the Kamp\'e de F\'eriet function for particular values of the involved parameters. 
Moreover from Eq. \eqref{(aaa)} and the following expression \citep{abr}
\begin{equation}
\frac{d^k}{dx^k}f(x)=k! \sum_{n\geq k} \frac{(-1)^{n-k}\left[{n}\atop{k}\right]}{n!}\Delta^nf(x)
\end{equation}
we infer the following formula for the forward differences of order $n$-th of the Kummer function:
\begin{equation}
\Delta_u^n {}_1\!F_1\left(u;s;y\right) \big\vert_{u=0}=\frac{y^n}{\langle s \rangle_{n}}. 
\end{equation}

\section{The Laguerre-Gamma polynomial approximation}
The Edgeworth expansion is widely used in the literature to approximate a PDF around the Gaussian PDF, using a linear combination of Hermite polynomials with coefficients depending on the cumulants of the target PDF. To approximate a non-Gaussian PDF, a different family of polynomials is necessary together with a different reference density \citep{Asmussen}. If the target PDF $g(t)$ is unknown but expected to be close to some reference density $\varphi(t),$ then $\varphi(t)$ is used as a first approximation to $g(t)$ and later the approximation is improved by using suitable correction terms depending on a set of orthonormal polynomials. The following theorems show how to approximate the FPT PDF of a Feller process by using as reference density the gamma PDF with scale parameter $\alpha+1>0$ and shape parameter $\beta>0$ 
\begin{equation}
\varphi_{\alpha,\beta}(t) = \frac{\beta^{\alpha+1}}{\Gamma(\alpha+1)}
t^{\alpha} e^{- \beta t}, \,\, t > 0.
\label{(gammafun)}
\end{equation}
\begin{theorem} \label{ThApp}  Let
$a_k^{(\alpha)}={\mathbb E}[Q_k^{(\alpha)}(\beta T)], k \geq 0$ 
where
\begin{equation}
Q_k(t) = (-1)^k \left( \frac{\Gamma(\alpha+1+k)}{k! \, \Gamma(\alpha+1)}\right)^{-1/2} \!\!\! {L}_k^{(\alpha)}(t),
\label{(orthonormale)}
\end{equation}
and ${L}_k^{(\alpha)}(t)$ is the $k$-th generalized Laguerre polynomial 
\begin{equation}
{L}_k^{(\alpha)}(t) = \frac{\Gamma(\alpha+1+k)}{k!}
\sum_{j=0}^k \binom{k}{j} \frac{(-t)^j}{\Gamma(\alpha+j+1)}. 
\label{(genLag)}
\end{equation}
For $t > 0$ the series
\begin{equation}
U(\beta t,r):=\sum_{k \geq 0} a_k^{(\alpha)} Q_k^{(\alpha)}(\beta t)r^k
\label{(seriesU)}
\end{equation}
converges if $r \in (0,1)$ and 
\begin{equation}
\lim_{r \rightarrow 1} U(\beta t,r) = \frac{g(t)}{\varphi_{\alpha,\beta}(t)}. 
\label{(lim1)}
\end{equation}
\end{theorem}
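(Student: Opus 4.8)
The plan is to recognize $U(\beta t,r)$ as the Abel (Poisson) mean of the Laguerre--Fourier expansion of $g/\varphi_{\alpha,\beta}$ and to identify its $r\to 1$ limit through the Poisson kernel attached to the system $\{Q_k^{(\alpha)}\}$. First I would record that the polynomials in Eq. \eqref{(orthonormale)} are orthonormal with respect to the reference density with $\beta=1$, i.e. $\int_0^\infty Q_k^{(\alpha)}(x)Q_m^{(\alpha)}(x)\varphi_{\alpha,1}(x)\,{\rm d}x=\delta_{km}$; this is just the classical orthogonality of the generalized Laguerre polynomials \eqref{(genLag)} rescaled by the normalizing constant in \eqref{(orthonormale)}, and in particular $Q_0^{(\alpha)}\equiv 1$. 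Writing $h(x):=g(x/\beta)/[\beta\,\varphi_{\alpha,1}(x)]$, the change of variable $x=\beta t$ shows both that $a_k^{(\alpha)}={\mathbb E}[Q_k^{(\alpha)}(\beta T)]=\langle h,Q_k^{(\alpha)}\rangle$ are exactly the Fourier coefficients of $h$ in $L^2(\varphi_{\alpha,1})$ and that $h(\beta t)=g(t)/\varphi_{\alpha,\beta}(t)$ (using $\varphi_{\alpha,\beta}(t)=\beta\,\varphi_{\alpha,1}(\beta t)$). Thus the limit \eqref{(lim1)} is precisely the assertion that the Abel means of the Laguerre--Fourier series of $h$ reproduce $h$ pointwise.

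For the convergence claim when $r\in(0,1)$, I would use the standing hypothesis (secured by the sufficient conditions on the cumulants discussed later) that $h\in L^2(\varphi_{\alpha,1})$. Bessel's inequality then yields $|a_k^{(\alpha)}|\le\norm{h}$ uniformly in $k$, while for a fixed argument the Laguerre values $|Q_k^{(\alpha)}(\beta t)|$ grow at most polynomially in $k$. Each term of \eqref{(seriesU)} is therefore dominated by a polynomial in $k$ times $r^k$, so the series converges absolutely for every $r\in(0,1)$.

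The core of the argument is the $r\to 1$ limit. Since the kernel $K_r(\beta t,\cdot):=\sum_{k\ge0}Q_k^{(\alpha)}(\beta t)Q_k^{(\alpha)}(\cdot)r^k$ lies in $L^2(\varphi_{\alpha,1})$ with Fourier coefficients $Q_k^{(\alpha)}(\beta t)r^k$, Parseval's identity gives $U(\beta t,r)=\langle h,K_r(\beta t,\cdot)\rangle=\int_0^\infty K_r(\beta t,\xi)h(\xi)\varphi_{\alpha,1}(\xi)\,{\rm d}\xi$. The kernel $K_r(x,\xi)$ is summed in closed form by the Hille--Hardy bilinear generating formula for Laguerre polynomials, which expresses it as an explicitly \emph{positive} quantity built from the modified Bessel function $I_\alpha\!\big(2\sqrt{rx\xi}/(1-r)\big)$ and the factor $\exp\!\big(-r(x+\xi)/(1-r)\big)$. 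Two facts then turn $K_r(x,\xi)\varphi_{\alpha,1}(\xi)\,{\rm d}\xi$ into an approximate identity: it is nonnegative, and, because $Q_0^{(\alpha)}\equiv1$, orthonormality forces $\int_0^\infty K_r(x,\xi)\varphi_{\alpha,1}(\xi)\,{\rm d}\xi=1$ for every $r\in(0,1)$.

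The hard part will be the concentration step, namely showing that this unit-mass positive kernel collapses onto $\xi=x$ as $r\to1^-$. Inserting the large-argument asymptotics $I_\alpha(z)\sim e^{z}/\sqrt{2\pi z}$, the Bessel and exponential factors combine into something behaving like $\exp\!\big[-\tfrac{r}{1-r}(\sqrt{x}-\sqrt{\xi})^2\big]$, a Gaussian in $\sqrt{\xi}$ whose width shrinks at rate $1-r$; hence for any $\delta>0$ one expects $\int_{|\xi-x|>\delta}K_r(x,\xi)\varphi_{\alpha,1}(\xi)\,{\rm d}\xi\to0$. Combining this escape of mass with the continuity of $h$ at the point $x=\beta t>0$ and a domination argument (from the $L^2$ bound) controlling the tail contribution of $h$, the standard approximate-identity conclusion gives $\lim_{r\to1}U(\beta t,r)=h(\beta t)=g(t)/\varphi_{\alpha,\beta}(t)$, which is \eqref{(lim1)}. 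The genuinely delicate points are making the Bessel asymptotics uniform enough over the region $|\xi-x|>\delta$ to guarantee the vanishing of the off-diagonal mass, and ensuring the tails of $h$ do not spoil the limit; everything else is orthogonality bookkeeping and the $L^2$ theory of complete orthonormal systems.
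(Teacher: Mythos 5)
Your overall strategy --- writing $U(\beta t,r)$ as the integral of $h$ against the Hille--Hardy bilinear kernel and running an approximate-identity argument --- is a legitimate route to Abel summability of Laguerre series, but it is not the paper's route and it has a gap relative to the theorem as stated. The paper's proof is a short reduction: after rescaling to $w=\beta t$ it invokes Hille's classical theorem on Laguerre series, whose hypothesis is the integrability condition $\int_0^\infty e^{-zt}t^\alpha f(t)\,{\rm d}t=\Gamma(\alpha+1)\int_0^\infty e^{-(z-1)t}g_\beta(t)\,{\rm d}t<\infty$ for every $z>1$; this holds automatically because $g_\beta$ is a probability density (equivalently, because the Laplace transform \eqref{Lapl_bo2} is finite). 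No square-integrability enters.

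The gap is that you base both the convergence for $r\in(0,1)$ and the Parseval step on the ``standing hypothesis'' that $g/\varphi_{\alpha,\beta}\in{\mathscr L}^2(\nu)$. That hypothesis is not among the assumptions of Theorem \ref{ThApp}; it is precisely the \emph{additional} condition the paper introduces only later (Corollary \ref{sufcond} and Proposition \ref{Icond} and its sequel) in order to obtain the stronger conclusion that the series converges at $r=1$ itself, and the paper shows it is not automatic --- it forces, for instance, $\beta<2/{\mathbb E}[T]$. So your argument establishes the theorem only under an added assumption that the theorem is designed to avoid, and for parameter choices violating that constraint your proof gives nothing while the statement still holds. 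Separately, even granting the ${\mathscr L}^2$ hypothesis, you leave the decisive analytic step --- uniform off-diagonal concentration of the Hille--Hardy kernel and control of the tail contribution of $h$ --- as an acknowledged sketch, and that step is exactly the content one outsources by citing Hille. To repair the argument in the spirit of the paper, replace the ${\mathscr L}^2$ assumption by Hille's integrability criterion and verify it directly from the finiteness of $\widetilde g$.
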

\begin{proof}
Set $\beta t=w$ and observe that  in Eq. \eqref{(seriesU)} the series might be rewritten as
\begin{equation}
U(w,r)=\sum_{k \geq 0} b_k^{(\alpha)} {L}_k^{(\alpha)}(w)r^k, \,\, w > 0
\label{(seriesU1)}
\end{equation}
where for $k \geq 0$ 
\begin{equation}b_k^{(\alpha)} = \frac{\Gamma(k+1)}{\Gamma(k+1+\alpha)}
\int_0^{\infty} e^{-t} \, t^{\alpha} \, {L}_k^{(\alpha)}(t) \, f(t) \, {\rm d}t
\end{equation}
with $f(t)=g_\beta(t)/\varphi_{\alpha,1}(t), g_\beta(t)$ the PDF of $\beta T$ and $\varphi_{\alpha,1}(t)$ as given in
Eq. \eqref{(gammafun)}.
A sufficient condition to have the convergence of the series 
\eqref{(seriesU1)} for $r \in (0,1)$ at every point of continuity of $f(t)$ is \citep{Laguerre1}
\begin{equation}
\int_0^{\infty} e^{-zt} t^{\alpha} f(t) {\rm d}t = \Gamma(\alpha+1) \int_0^{\infty} e^{-(z-1)t} g_\beta(t) {\rm d}t  < \infty
\label{(convFeller)}
\end{equation}
for every $z - 1 >0,$ which is fulfilled when $T$ is the FPT random variable of a Feller process from Eq. \eqref{Lapl_bo2}. Therefore
\begin{equation}
\lim_{r \rightarrow 1} U(w,r) =  \frac{g_\beta(w)}{\varphi_{\alpha,1}(w)}
\label{(lim2)}
\end{equation}
and Eq. \eqref{(lim1)} follows from Eq. \eqref{(lim2)} after some algebra, replacing $w$ by $\beta t$  and recalling that 
$\beta g_\beta(\beta t)=g(t).$
\end{proof} 
Eqs. \eqref{(seriesU)} and \eqref{(lim1)} justify the
approximation of $g$ with the polynomial of degree $n$
\begin{equation}
\hat{g}(t) := \varphi_{\alpha,\beta}(t) \sum_{k=0}^{n} a^{(\alpha)}_k Q^{(\alpha)}_k(\beta t)
\label{approximationgen}
\end{equation}
for a suitable choice of $n,$ that we discuss in the next section. 
\begin{rem}
{\rm The polynomial approximation \eqref{approximationgen} is particularly suited when the PDF of $T$ is unknown, but its moments are available, as happens for the FPT random variable of the Feller process thanks to Corollary \ref{cor_2}. 
Indeed by observing that 
\begin{eqnarray*}
{\mathbb E}[Q_k^{(\alpha)}(\beta T)] & =  & (-1)^k \left( \frac{\Gamma(\alpha+1+k)}{k! \Gamma(\alpha+1)}\right)^{-1/2} \!\!\! \int_0^{\infty}  g(t) {L}_k^{(\alpha)}(\beta t) {\rm d} t \nonumber \\
 & = &  (-1)^k  \left( \frac{\Gamma(\alpha+1)\Gamma(\alpha+1+k)}{k!} \right)^{1/2}  \sum_{j=0}^k \binom{k}{j} \frac{(-\beta)^j {\mathbb E}(T^j)}{\Gamma(\alpha+j+1)}, 
\end{eqnarray*}
some algebra allows us to rewrite $\hat{g}(t)$ in Eq. \eqref{approximationgen} as
\begin{equation}
\hat{g}(t) = \beta (\beta t)^{\alpha} e^{- \beta t}  \sum_{k=0}^{n} A_k^{(\alpha)}{L}_k^{(\alpha)}(\beta t), \,\, t > 0
\label{approximationgen1}
\end{equation}
with coefficients 
\begin{equation}
A_k^{(\alpha)}  =  \sum_{j=0}^k \binom{k}{j} \frac{(-\beta)^j {\mathbb E}(T^j)}{\Gamma(\alpha+j+1)}, \,\, k=0,1, \ldots,n
\label{(coeffak1)}
\end{equation}
 depending on the moments of $T.$  Note that $\{A_k^{(\alpha)}\}$ might be expressed directly in terms of cumulants of $T$ by using Eq. \eqref{momcum}.}
\end{rem}

Sufficient conditions for the convergence of the series 
\begin{equation}
\sum_{k \geq 0} a_k^{(\alpha)} Q_k^{(\alpha)}(\beta t), \,\, t > 0
\label{(seriesU2)}
\end{equation}
can be recovered by using the analogous on the Laguerre series \citep{Laguerre1}. Indeed in such a case we have
$\lim_{r \rightarrow 1} U(\beta t,r) = U(\beta t,1)$ and  
\begin{equation}
g(t) = \varphi_{\alpha,\beta}(t) \sum_{k \geq 0}  a^{(\alpha)}_k Q^{(\alpha)}_k(\beta t), \,\, t >0.
\label{approximationgen2}
\end{equation}
The next corollary gives a sufficient condition on $g(t)$ to have
the series representation \eqref{approximationgen2}.
\begin{corollary} \label{sufcond}
The PDF $g(t)$ has the series representation \eqref{approximationgen2} if
\begin{equation}
\int_0^{\infty} t^{-\alpha} e^{\beta t} g(t)^2 {\rm d}t  <\infty. \label{(sufcond1)}
\end{equation}
\end{corollary}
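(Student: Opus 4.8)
The plan is to reduce the claimed series representation to the classical convergence criterion for generalized Laguerre series applied to the function $f$ already introduced in the proof of Theorem \ref{ThApp}, and then to verify that the hypothesis \eqref{(sufcond1)} is exactly the $L^2$-weight condition appearing in that criterion. Recall from the proof of Theorem \ref{ThApp} that, after the substitution $w = \beta t$, the series \eqref{(seriesU2)} coincides with the Laguerre--Fourier series $\sum_{k \geq 0} b_k^{(\alpha)} {L}_k^{(\alpha)}(w)$ of $f(w) = g_\beta(w)/\varphi_{\alpha,1}(w)$, whose coefficients are the Fourier coefficients of $f$ against the orthogonal system $\{{L}_k^{(\alpha)}\}$ with respect to the weight $e^{-w} w^\alpha$; equivalently, in the normalized form \eqref{(orthonormale)} the $\{Q_k^{(\alpha)}\}$ are orthonormal in $L^2(\varphi_{\alpha,1})$ and the $a_k^{(\alpha)}$ are the associated Fourier coefficients of $f$.

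First I would invoke the sufficient condition of \citep{Laguerre1}: if $\int_0^{\infty} e^{-w} w^\alpha f(w)^2 \, {\rm d}w < \infty$, then the Laguerre--Fourier series of $f$ converges to $f$, so that $\lim_{r \rightarrow 1} U(\beta t,r) = U(\beta t,1)$, as anticipated in the text preceding the statement. This is precisely the step that upgrades the Abel-type limit \eqref{(lim2)} established in Theorem \ref{ThApp} to genuine convergence of the series at $r=1$.

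Next I would translate this $L^2$ condition on $f$ back into a condition on $g$. Using $f(w) = \Gamma(\alpha+1)\, w^{-\alpha} e^{w} g_\beta(w)$ from \eqref{(gammafun)}, I would compute
\[
\int_0^{\infty} e^{-w} w^\alpha f(w)^2 \, {\rm d}w = \Gamma(\alpha+1)^2 \int_0^{\infty} w^{-\alpha} e^{w} g_\beta(w)^2 \, {\rm d}w,
\]
and then apply the change of variable $w = \beta t$ together with $\beta g_\beta(\beta t) = g(t)$ to recover, up to the positive constant $\Gamma(\alpha+1)^2 \beta^{-\alpha-1}$, exactly the integral in \eqref{(sufcond1)}. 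Hence \eqref{(sufcond1)} is equivalent to $f \in L^2(e^{-w} w^\alpha \, {\rm d}w)$, the convergence criterion applies, the series \eqref{(seriesU2)} sums to $f$, and multiplying by $\varphi_{\alpha,\beta}(t) = \beta\, \varphi_{\alpha,1}(\beta t)$ then yields \eqref{approximationgen2}.

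The routine but error-prone part is the change-of-variables bookkeeping that produces the constant and confirms the exact form of \eqref{(sufcond1)}; the only genuine subtlety is the mode of convergence, since the $L^2$ criterion from \citep{Laguerre1} gives equality almost everywhere, whereas a pointwise statement for all $t>0$ requires either continuity of $g$ (hence of $f$) or the sharper pointwise convergence theorem for Laguerre expansions, which likewise rests on \eqref{(sufcond1)} supplemented by local regularity. I would therefore read the representation \eqref{approximationgen2} as holding at every point of continuity of $g$, consistent with the phrasing used in the proof of Theorem \ref{ThApp}.
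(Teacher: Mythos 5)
Your proof is correct, and your change-of-variables bookkeeping (the constant $\Gamma(\alpha+1)^2\beta^{-\alpha-1}$ relating the weighted $L^2$ norm of $f$ to the integral in \eqref{(sufcond1)}) checks out. The route differs somewhat from the paper's. You reduce everything to the scaled variable $w=\beta t$, identify \eqref{(sufcond1)} with the condition $f\in L^2(e^{-w}w^{\alpha}\,{\rm d}w)$, and then invoke the classical convergence criterion for Laguerre--Fourier series from the cited reference, finally tying the sum back to $g/\varphi_{\alpha,\beta}$ through the Abel limit of Theorem~\ref{ThApp}. The paper instead works directly in the unscaled variable: it observes that \eqref{(sufcond1)} says exactly $g/\varphi_{\alpha,\beta}\in{\mathscr L}^2(\nu)$ with $\nu$ the gamma measure, argues that the polynomials are complete in ${\mathscr L}^2(\nu)$ because $\nu$ admits a moment generating function (hence is moment-determinate), notes that $\{Q_k^{(\alpha)}(\beta t)\}$ is an orthonormal polynomial system there, and concludes by uniqueness of the Fourier expansion with coefficients $a_k^{(\alpha)}={\mathbb E}[Q_k^{(\alpha)}(\beta T)]$ --- without ever returning to the Abel limit. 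What your version buys is an explicit verification that the hypothesis matches the classical Laguerre-series criterion, plus an honest accounting of the mode of convergence: you correctly point out that the $L^2$/completeness argument only yields equality almost everywhere, and that the pointwise reading of \eqref{approximationgen2} requires continuity of $g$ (or the Abel-summability bridge of Theorem~\ref{ThApp}); the paper's phrase ``from whose uniqueness Eq.~\eqref{approximationgen2} follows'' silently elides this point. What the paper's version buys is self-containedness --- completeness via moment determinacy needs no external convergence theorem --- and it keeps the coefficients in the probabilistic form ${\mathbb E}[Q_k^{(\alpha)}(\beta T)]$ throughout, which is the form actually used in the computations that follow.
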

\begin{proof}
Condition \eqref{(sufcond1)} is equivalent to ask $g(t)/\varphi_{\alpha,\beta}(t) \in {\mathscr L}^2(\nu)$, equipped with the usual inner product $<g_1,g_2>=\int g_1g_2d\nu$ and $\nu$ the measure having density $\varphi_{\alpha,\beta}(t).$ As $\nu$ admits moment generating function and all its moments are finite,  there exists a complete set of orthonormal polynomials in ${\mathscr L}^2(\nu),$ 
such that if $g/\varphi_{\alpha,\beta} \in {\mathscr L}^2(\nu),$ we may expand $g(t)/\varphi_{\alpha,\beta}(t)$
in terms of these polynomials. Let us observe that 
$\{Q_k^{(\alpha)}(\beta t)\}$  is a family of orthonormal polynomials in ${\mathscr L}^2(\nu)$  since $\{{L}_k^{(\alpha)}(t)\}$ is a family of orthogonal polynomials with respect to the weight function $t^{\alpha} e^{-t}.$   Therefore the Laguerre series \eqref{(seriesU2)} with $a_k^{(\alpha)}={\mathbb E}[Q_k^{(\alpha)}(\beta T)], k \geq 0$ represents the Fourier-Laguerre expansion of $g(t)/\varphi_{\alpha,\beta}(t)$ from whose uniqueness Eq. \eqref{approximationgen2} follows. 
\end{proof}
As $g(t)$ is unknown, it's not easy to verify directly the condition \eqref{(sufcond1)}. If 
\begin{equation}
\sum_{k=0}^{\infty} |{\mathbb E}[Q_k^{(\alpha)}(\beta T)]| < \infty
\label{Parseval1}
\end{equation}
then expansion \eqref{approximationgen2} holds, due to the Parseval identity. The accuracy of the approximation \eqref{approximationgen} depends upon the decay rate of $\{{\mathbb E}[Q_k^{(\alpha)}(\beta T)]\},$ as the ${\mathscr L}^2(\nu)$-loss is $\sum_{k=n+1}^{\infty} ( {\mathbb E}[Q_k^{(\alpha)}(\beta T)] )^2$ for a given order of truncation $n.$ A sufficient condition to have Eq. \eqref{(sufcond1)} is 
\begin{equation}
\left(|{\mathbb E}[Q_k^{(\alpha)}(\beta T)]|\right)^2 \approx k^{(-1 - \varepsilon)} \,\, \hbox{as $k \rightarrow \infty$ and $\varepsilon >0.$}
\end{equation}
So it is fundamental to have a good algorithm to evaluate the coefficients $\{{\mathbb E}[Q_k^{(\alpha)}(\beta T)]\}.$ 
This issue will be analyzed in the next paragraph.
\subsection{{\small Computational issues}}
To simplify the implementation, the approximating polynomial $\hat{g}(t)$ has been computed by using Eq. \eqref{approximationgen1}. The first five generalized Laguerre polynomials are given in Table \ref{TTable2}. 
\begin{table}[ht] 
\caption{Generalized Laguerre polynomials}  
\centering  
\begin{tabular}{c|l}  
\hline\hline  
$k\, $ & $\,\, {L}_k^{(\alpha)}(t) $ \\ [0.5ex]  
\hline  
$1\, $ &  $\,\, \langle \alpha +1 \rangle_1 - t$ \\  
$2 \, $ &  $\,\, \left(\langle \alpha +1 \rangle_2 - 2 \langle \alpha +2 \rangle_1 t + t^2 \right)/2!$ \\
$3 \, $ &  $\,\,\left(\langle \alpha +1 \rangle_3 - 3 \langle \alpha +2 \rangle_2 t + 3 \langle \alpha +3 \rangle_1 t^2 - t^3\right)/3!$ \\
$4 \, $ &  $\,\,\left(\langle \alpha +1 \rangle_4 - 4 \langle \alpha +2 \rangle_3 t + 6 \langle \alpha +3 \rangle_2 t^2 - 4 \langle \alpha + 4 \rangle_1 t^3 + t^4\right)/4!$ \\
$5 \, $ &  $\,\,\left(\langle \alpha +1 \rangle_5 - 5 \langle \alpha +2 \rangle_4 t + 10 \langle \alpha +3 \rangle_3 t^2 - 10 \langle \alpha + 4 \rangle_2 t^3 + 5 \langle \alpha + 5 \rangle_1 t^4 -  t^5\right)/5!$ \\ [1ex]  
\hline  
\end{tabular}\label{TTable2}
\end{table}
Many packages\footnote{See for example the package {\tt orthopolynom} in {\tt R}.} return the first $n$ generalized Laguerre polynomials
by using the following recursion formula \citep{EC} 
\begin{equation}
 L^{(\alpha)}_{k + 1}(t) = \frac{(2k + 1 + \alpha - t)L^{(\alpha)}_k(t) - (k + \alpha) L^{(\alpha)}_{k - 1}(t)}{k + 1}, \,\, k \geq 0,
 \label{recureq}
 \end{equation}
with ${L}_0^{(\alpha)}(t)  = 1.$ The same recursion \eqref{recureq} allows an efficient computation of the coefficients $\{A_k^{(\alpha)}\}. $ This result is proved in the following lemma where we use the symbolic calculus \citep{EC} formalized through the employment of a linear operator acting on a ring of polynomials, for details see \cite{DiNardo}.
\begin{prop}
Let $A_k^{(\alpha)}(y)=\frac{k!}{\Gamma(\alpha+1+k)} L^{(\alpha)}_{k}(y), $ for $k \geq 0.$ Then 
\begin{equation}
A_k^{(\alpha)} = \mathrm E[A_k^{(\alpha)}(\beta m)]
\end{equation}
where $\mathrm E$ is a linear operator transforming $m^j$ in $m_j=\mathbb{E}[T^j],$ that is $\mathrm E[m^j]=\mathbb{E}[T^j], j \geq 1$ and $\mathrm E[1]=m_0=1.$
\end{prop}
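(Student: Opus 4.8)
The plan is to reduce the claimed umbral identity directly to the explicit formula \eqref{(coeffak1)} for $A_k^{(\alpha)}$, using only the linearity of the operator $\mathrm E$ and its action $\mathrm E[m^j]=\mathbb E[T^j]$ on the powers of the symbol $m$. No analytic machinery is required; the whole argument is a bookkeeping verification.

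First I would insert the definition \eqref{(genLag)} of the generalized Laguerre polynomial into $A_k^{(\alpha)}(y)=\frac{k!}{\Gamma(\alpha+1+k)}L_k^{(\alpha)}(y)$. The prefactor $\frac{k!}{\Gamma(\alpha+1+k)}$ cancels against the leading factor $\frac{\Gamma(\alpha+1+k)}{k!}$ of $L_k^{(\alpha)}$, so that $A_k^{(\alpha)}(y)$ is the polynomial
\[
A_k^{(\alpha)}(y)=\sum_{j=0}^k \binom{k}{j}\frac{(-y)^j}{\Gamma(\alpha+j+1)}.
\]
Replacing $y$ by $\beta m$ and writing $(-\beta m)^j=(-\beta)^j m^j$ turns $A_k^{(\alpha)}(\beta m)$ into a polynomial in the single symbol $m$ with real coefficients. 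Applying $\mathrm E$ term by term and using $\mathrm E[m^j]=\mathbb E[T^j]$ for $j\geq 1$ together with the convention $\mathrm E[1]=m_0=1=\mathbb E[T^0]$ then gives
\[
\mathrm E[A_k^{(\alpha)}(\beta m)]=\sum_{j=0}^k \binom{k}{j}\frac{(-\beta)^j\,\mathbb E[T^j]}{\Gamma(\alpha+j+1)},
\]
which is exactly the defining expression \eqref{(coeffak1)} of $A_k^{(\alpha)}$, proving the claim.

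The statement poses no genuine difficulty: the only point requiring care is the $j=0$ term, which must be evaluated through the convention $\mathrm E[1]=m_0=1$ rather than the monomial rule. The reason the identity is worth recording is computational: since each $A_k^{(\alpha)}(y)$ is a rescaled Laguerre polynomial, the family $\{A_k^{(\alpha)}(y)\}$ inherits a three-term recurrence from \eqref{recureq}, so the numerical coefficients $\{A_k^{(\alpha)}\}$ can be produced by running that recurrence symbolically in $m$ and applying the linear operator $\mathrm E$ only once at the end, which avoids re-evaluating the binomial sum \eqref{(coeffak1)} for each $k$.
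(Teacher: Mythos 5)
Your proposal is correct and follows essentially the same route as the paper: expand $A_k^{(\alpha)}(\beta m)$ via the definition \eqref{(genLag)} (with the prefactor cancelling), apply the linear operator $\mathrm E$ term by term, and recognize the result as \eqref{(coeffak1)}. Your version is merely more explicit about the cancellation and the $j=0$ convention, which the paper leaves implicit.
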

\begin{proof}
The result follows from Eq. \eqref{(coeffak1)}, since 
\begin{equation}
\mathrm E[A_k(\beta m)]= \sum_{j=0}^k \binom{k}{j} \frac{(-\beta)^j \mathrm E[m^j]}{\Gamma(\alpha+j+1)}
\end{equation}
and using the linear operator $\mathrm E.$ 
\end{proof}
Note that the moments $\{m_j\}$ are calculated from cumulants using  the recursive relation \eqref{recumom}.

The question of how to select the parameters $\alpha$ and $\beta$ in Eq. \eqref{approximationgen1} results to be a crucial point. A general guideline to their selection consists in matching the first two moments of $g(t)$  with the first two of 
$\varphi_{\alpha,\beta}(t).$ From a statistical point of view this choice mimics the well-known method of moments. From a computational point of view, if the first two moments of $g(t)$ and $\varphi_{\alpha,\beta}(t)$ coincide, then  $a_k^{(\alpha)}  
= A_k^{(\alpha)} = 0$ for $k = 1, 2$ simplifying the computation of  Eq. \eqref{approximationgen1} (\cite{Asmussen}). According to this rule, if 
\begin{equation}
\beta := \frac{c_1(T)}{c_2(T)} \qquad \hbox{\rm and} \qquad
\alpha := \beta {\mathbb E}[T]  -1 = \frac{c_1^2(T)}{c_2(T)}-1
\label{mommethodesti}
\end{equation}
 then $A_1^{(\alpha)}=A_1^{(\alpha)}=0$ in  Eq. \eqref{approximationgen1}. 

The choice of $\alpha$ and $\beta$ in Eq. \eqref{mommethodesti} deserves some deeper analysis. First note that the shape parameter $\alpha + 1$ is given by the mean of the scaled random variable  
$\beta T.$ Instead, the rate parameter $\beta$ measures  the inverse relative variance  of $g(t).$ The relative variance of a PDF is a normalized measure of its dispersion. Thus, the more spread out is $\varphi_{\alpha,\beta}(t)$ the greater is the underdispersion of $g(t).$ The next section gives some examples and applications of Eq. \eqref{approximationgen1} stopped at $n=5.$ The motivation of this choice stems from  the statistical meaning of the coefficients $\{A_k^{(\alpha)}\}.$  As $\alpha:={\mathbb E}[\beta T-1],$   symbolic calculus shows that the coefficients $\{A_k^{(\alpha)}\}$ are related to the $k$-th moment ${\mathbb E}[(\beta T-1)^k],$ without the normalizing constant $\Gamma(\alpha+j+1),$ which depends on the orthonormal property of $\{Q_k^{(\alpha)}(t)\}.$ Thus the third-order coefficient $A_3^{(\alpha)}$ accounts for the skewness of $g(t)$ while  the fourth-order coefficient $A_4^{(\alpha)}$ involves the weight of tails in causing dispersion, that is the kurtosis. The fifth-order coefficient $A_5^{(\alpha)}$ involves the hyper-skewness $m_5$ of $g(t)$  \citep{FinanceK}. Hyper-skewness measures the asymmetric sensitivity of the kurtosis, that is the relative importance of tails versus the center in causing skewness. Note that the sixth moment $m_6$ in $A_6^{(\alpha)}$ is the PDF hyper-kurtosis and measures both the peakedness and the tails compared with the normal distribution. As we are considering PDFs with  support $(0,\infty)$, the contribution of this coefficient is not statistically meaningful and not considered here. \\ \smallskip \\
Some suitable choices of the rate parameter $\beta$ might improve the approximation, as the following propositions show. 
\begin{prop} \label{Icond}
If $g/\varphi_{\alpha,\beta} \in {\mathscr L}^2(\nu),$ then $\beta < \frac{2}{{\mathbb E}[T]}.$ 
\end{prop}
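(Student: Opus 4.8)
The plan is to unfold the hypothesis $g/\varphi_{\alpha,\beta}\in{\mathscr L}^2(\nu)$ into its explicit integrability form and then to read it as information on the exponential decay rate of $g$, i.e.\ on the abscissa of convergence of the moment generating function of $T$. By the computation already carried out in Corollary \ref{sufcond}, the hypothesis is equivalent to \eqref{(sufcond1)}, that is $\int_0^\infty t^{-\alpha}e^{\beta t}g(t)^2\,\mathrm dt<\infty$. Writing $M(z):={\mathbb E}[e^{zT}]=\int_0^\infty e^{zt}g(t)\,\mathrm dt$, the goal becomes to deduce from this condition a bound linking $\beta$ to $M$, and hence to ${\mathbb E}[T]$.

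First I would apply the Cauchy--Schwarz inequality to the factorisation $e^{zt}g(t)=(t^{-\alpha/2}e^{\beta t/2}g(t))(t^{\alpha/2}e^{(z-\beta/2)t})$, which gives
\[
M(z)^2\le\Big(\int_0^\infty t^{-\alpha}e^{\beta t}g(t)^2\,\mathrm dt\Big)\Big(\int_0^\infty t^{\alpha}e^{(2z-\beta)t}\,\mathrm dt\Big).
\]
The first factor is finite by \eqref{(sufcond1)}, while the second is a gamma-type integral, finite exactly when $2z<\beta$. Hence $M(z)<\infty$ for every $z<\beta/2$, so the abscissa of convergence $z_c:=\sup\{z:M(z)<\infty\}$ satisfies $z_c\ge\beta/2$, i.e.\ $\beta\le 2z_c$. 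The inequality is in fact strict: at $\beta=2z_c$ the tail of the integrand $t^{-\alpha}e^{\beta t}g(t)^2$ ceases to decay, since $g$ decays exactly like $e^{-z_c t}$, and \eqref{(sufcond1)} fails. Thus $\beta<2z_c$.

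The decisive, and I expect hardest, step is the reverse bound $z_c\le 1/{\mathbb E}[T]$, where the specific nature of the Feller passage time must enter: a generic positive density with finite mean need not satisfy it. Here I would invoke the spectral decomposition of a one-dimensional diffusion hitting time \citep{Kent1980,Kent82,Linetsky}, which---consistently with $\widetilde g$ being the ratio of Kummer functions in \eqref{Lapl_bo2}, whose reciprocal has simple poles at $-\lambda_n$---represents $g$ as a mixture of exponentials $g(t)=\int_0^\infty\lambda e^{-\lambda t}\,\mathrm d\rho(\lambda)$, with $\rho$ a probability measure supported on $[\lambda_1,\infty)$ and $z_c=\lambda_1$. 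Since $\lambda^{-1}\le\lambda_1^{-1}$ on the support of $\rho$, integration gives ${\mathbb E}[T]=\int_0^\infty\lambda^{-1}\,\mathrm d\rho(\lambda)\le\lambda_1^{-1}=1/z_c$, that is $z_c\le 1/{\mathbb E}[T]$. Equivalently, one may record the super-multiplicativity ${\mathbb E}[T^k]\ge k!\,{\mathbb E}[T]^k$ (Jensen applied to $\lambda\mapsto\lambda^{-k}$ under $\rho$), which forces $M(z)\ge(1-z\,{\mathbb E}[T])^{-1}\to\infty$ as $z\uparrow 1/{\mathbb E}[T]$, and hence again $z_c\le 1/{\mathbb E}[T]$.

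Combining the two estimates yields $\beta<2z_c\le 2/{\mathbb E}[T]$, as required. The two Cauchy--Schwarz estimates are routine; the genuine content, and the part I would handle with most care, is the mixture-of-exponentials representation (equivalently, the moment inequality), since this is precisely where the Feller first-passage structure, rather than an arbitrary density, must be used.
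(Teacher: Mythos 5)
Your first step is sound and is a genuinely different (and in principle more careful) way of extracting information from the hypothesis than the paper uses: the Cauchy--Schwarz factorisation correctly shows that \eqref{(sufcond1)} forces ${\mathbb E}[e^{zT}]<\infty$ for all $z<\beta/2$, hence $\beta\le 2z_c$ with $z_c$ the abscissa of convergence of the moment generating function. The paper instead argues directly on the tail integral $I_2=\int_1^\infty t^{-\alpha}e^{\beta t}g(t)^2\,{\rm d}t$, substituting the long-time asymptotics $g(t)\approx {\mathbb E}[T]^{-1}e^{-t/{\mathbb E}[T]}$ quoted from \cite{Masoliver}, so that the integrand behaves like $t^{-\alpha}e^{(\beta-2/{\mathbb E}[T])t}$ and finiteness forces $\beta<2/{\mathbb E}[T]$. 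In other words, the paper takes as an input precisely the identification of the exponential decay rate of $g$ with $1/{\mathbb E}[T]$, which is the step you rightly single out as the crux.

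The gap is in your resolution of that crux: the inequality $z_c\le 1/{\mathbb E}[T]$ is not available by the route you propose, and for diffusion hitting times it in fact goes the other way. The spectral decomposition of Kent and Linetsky represents $T$ as a \emph{sum} of independent exponentials, $\widetilde g(z)=\prod_n(1+z/\lambda_n)^{-1}$, not as a \emph{mixture}: the density is $\sum_n c_n\lambda_n e^{-\lambda_n t}$ with coefficients of varying sign, and it cannot be completely monotone, since a mixture of exponentials is decreasing with its maximum at $0$ while the FPT density here is unimodal with $g(0)=0$. For a sum of independent exponentials one has ${\mathbb E}[T]=\sum_n\lambda_n^{-1}\ge\lambda_1^{-1}$, i.e.\ $z_c=\lambda_1\ge 1/{\mathbb E}[T]$ --- the reverse of what you need --- and likewise the moment inequality ${\mathbb E}[T^k]\ge k!\,{\mathbb E}[T]^k$ fails already at $k=2$: it would force a coefficient of variation at least $1$, whereas ${\rm Var}(T)=\sum_n\lambda_n^{-2}\le\bigl(\sum_n\lambda_n^{-1}\bigr)^2=({\mathbb E}[T])^2$ gives a coefficient of variation at most $1$. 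So the chain $\beta<2z_c\le 2/{\mathbb E}[T]$ breaks at its second link, and your (correct) first half only yields $\beta\le 2z_c$ with $z_c\ge 1/{\mathbb E}[T]$, which does not imply the stated bound. To reach the paper's conclusion you must use, as the paper does, the specific asymptotic $g(t)\approx {\mathbb E}[T]^{-1}e^{-t/{\mathbb E}[T]}$ of \cite{Masoliver} (which amounts to asserting that the decay rate equals $1/{\mathbb E}[T]$ in the relevant regime), rather than a general spectral or moment argument.
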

\begin{proof}
As  $g/\varphi_{\alpha,\beta} \in {\mathscr L}^2(\nu),$ all the integrals in 
\begin{equation}
\int_0^{\infty} t^{-\alpha} e^{\beta t} g(t)^2 {\rm d}t  = 
\int_0^{1} t^{-\alpha} e^{\beta t} g(t)^2 {\rm d}t  + \int_1^{\infty} t^{-\alpha} e^{\beta t} g(t)^2 {\rm d}t = I_1 + I_2 \label{(sufcond2)}
\end{equation}
are finite. The FPT PDF $g(t)$ has exponential long-time behavior  with parameter the inverse  mean FPT \citep{Masoliver}, i.e
\begin{equation}
g(t)\approx\frac{1}{\mathbb{E}(T)}e^{-\frac{t}{\mathbb{E}(T)}}.
\end{equation}
Therefore to have the convergence of the latter integral in Eq. \eqref{(sufcond2)}, it is necessary to have $\beta -  \frac{2}{{\mathbb E}[T]} < 0.$
\end{proof} 
Thus in the following proposition we give a sufficient condition for the ratio $g/\varphi_{\alpha,\beta}$ to be in ${\mathscr L}^2(\nu),$
assuming 
\begin{equation}
\beta < \frac{2}{{\mathbb E}[T]} \quad \Longleftrightarrow \quad 2 c_2(T) > [c_1(T)]^2
\label{ccond}
\end{equation}
under the choice \eqref{mommethodesti}.
\begin{prop}
We have $g/\varphi_{\alpha,\beta} \in {\mathscr L}^2(\nu)$
if $\beta < 2/{\mathbb E}[T]$ and $g(t)=o(t^{\delta})$ with \\$2\delta+1 > [c_1(T)]^2/c_2(T).$ 
\end{prop}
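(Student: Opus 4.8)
The plan is to reduce the membership $g/\varphi_{\alpha,\beta}\in{\mathscr L}^2(\nu)$ to the integrability condition \eqref{(sufcond1)} via Corollary \ref{sufcond}, and then to prove that $\int_0^{\infty} t^{-\alpha} e^{\beta t} g(t)^2\,{\rm d}t<\infty$ by splitting the integral at $t=1$ exactly as in \eqref{(sufcond2)}, writing it as $I_1+I_2$, and showing that each of the two pieces is finite, each under one of the two stated hypotheses.

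For the tail contribution $I_2=\int_1^{\infty} t^{-\alpha} e^{\beta t} g(t)^2\,{\rm d}t$ I would simply reuse the estimate already exploited in the proof of Proposition \ref{Icond}. The exponential long-time behavior $g(t)\approx \tfrac{1}{\mathbb E[T]}\,e^{-t/\mathbb E[T]}$ yields, for $t$ large, a bound $g(t)^2\le C\,e^{-2t/\mathbb E[T]}$, so that the integrand is dominated by $C\,t^{-\alpha}e^{(\beta-2/\mathbb E[T])t}$. The first hypothesis $\beta<2/\mathbb E[T]$ makes this exponent negative, whence the integrand decays exponentially and is integrable on $[1,\infty)$; on the bounded portion the continuity of $g$ suffices. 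This is exactly where the condition $\beta<2/\mathbb E[T]$ enters.

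The genuinely new step is the integral near the origin, $I_1=\int_0^{1} t^{-\alpha} e^{\beta t} g(t)^2\,{\rm d}t$. Since $e^{\beta t}$ is bounded on $[0,1]$, the only possible singularity comes from $t^{-\alpha}g(t)^2$. From $g(t)=o(t^{\delta})$ as $t\to 0^+$ I would derive $g(t)^2=o(t^{2\delta})$, hence a pointwise bound $t^{-\alpha}g(t)^2\le C'\,t^{2\delta-\alpha}$ on $(0,1]$, and $\int_0^{1} t^{2\delta-\alpha}\,{\rm d}t$ converges precisely when $2\delta-\alpha>-1$. Recalling from the choice \eqref{mommethodesti} that $\alpha+1=[c_1(T)]^2/c_2(T)$, the stated hypothesis $2\delta+1>[c_1(T)]^2/c_2(T)$ reads $2\delta>\alpha$, which is more than enough to guarantee $2\delta-\alpha>-1$ and hence the convergence of $I_1$. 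Combining the two bounds gives $I_1+I_2<\infty$, i.e.\ \eqref{(sufcond1)} holds, and the conclusion follows.

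I expect the only delicate point to be the passage from the asymptotic relation $g(t)=o(t^{\delta})$ to a bona fide pointwise bound valid on the whole of $(0,1]$ that can be taken under the integral sign: this needs the $o$-estimate to control a full right-neighborhood of the origin together with the boundedness of $g$ away from $0$, which is granted by the continuity of the FPT density. Everything else is routine, since the tail estimate is inherited verbatim from Proposition \ref{Icond}.
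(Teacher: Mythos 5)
Your proposal is correct and follows essentially the same route as the paper's proof: the same splitting of \eqref{(sufcond1)} into $I_1+I_2$ at $t=1$, the tail integral $I_2$ handled by the exponential long-time behavior exactly as in Proposition \ref{Icond}, and the integral near the origin controlled by the $o(t^{\delta})$ hypothesis combined with $\alpha+1=[c_1(T)]^2/c_2(T)$. If anything, your treatment of $I_1$ is slightly more explicit than the paper's (which simply asserts the integrand is bounded for $\alpha\in(0,1)$ when $2\delta>\alpha$), since you note that integrability of $t^{2\delta-\alpha}$ on $(0,1]$ only requires $2\delta-\alpha>-1$, so the stated hypothesis is in fact stronger than needed for this step.
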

\begin{proof}
Let us consider again Eq. \eqref{(sufcond2)}. Under condition 
\eqref{ccond}, the integral $I_2$ is always finite.
Moreover  $2 c_2(T) > [c_1(T)]^2 \iff -1<\alpha<1$.
Let us focus on $I_1.$ 
If $\alpha \in (0,1)$ and $g^2(t)=o(t^{2\delta})$ with $2 \delta > \alpha$ then the integrand function in $I_1$ is limited and $I_1$ is finite.
\end{proof}

\section{Examples}
As mentioned before, the Feller process plays a key role in a variety of applications. In this section we will investigate three examples coming from different areas of study.
\subsection{Example 1}
In the first example we consider dimensionless quantities to show the way the approximation is implemented and how it performs.

In Figure \ref{fig1}-top we show
the PDF of the first passage time through $S=1$ for the Feller process solution of Eq. \eqref{LIF} for $y_0=0.2$,  $c=0$, $\tau=1/1.5$, $\sigma=1$ and $\mu=0.9$. 
The curves are obtained using just $2,3,4$ or $5$ cumulants in the approximation method, i.e.
from Eq. \eqref{approximationgen1} for $n=2,3,4,5$. 
The approximations are compared to the  FPT PDF obtained through
simulation of $10^4$ first passage times of the process by discretization of Eq. \eqref{LIF} (see the \hyperref[appn]{Appendix}).
We observe that the agreement is satisfactory even for small $n$, altough the expression of the cumulants of any order is available and in principle can be used to improve the approximation.
The absolute error between the simulated PDF and the approximated one is shown in Figure \ref{fig1}-bottom. We observe that the error remains smaller than $0.05$ for $t>0$.
In $t=0$ the error is bigger, but the difference is due mainly to the error in the simulation rather than in the approximation. The reason is in the criteria for the bandwidth chosen in building the density from the histograms of the simulated first passage times.
\begin{figure}[htbp]
\begin{center}
\includegraphics[width=9cm]{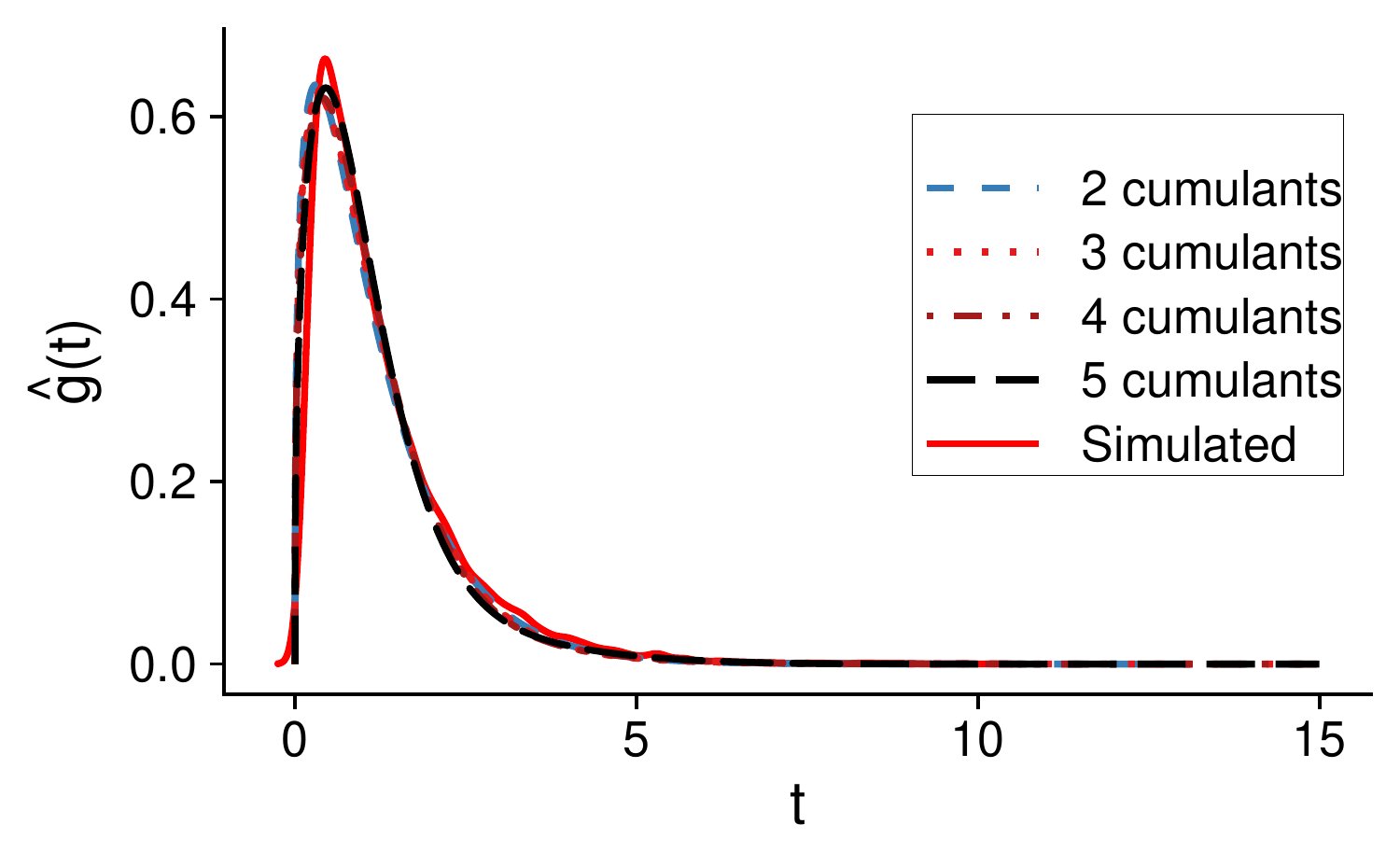}
\includegraphics[width=9cm]{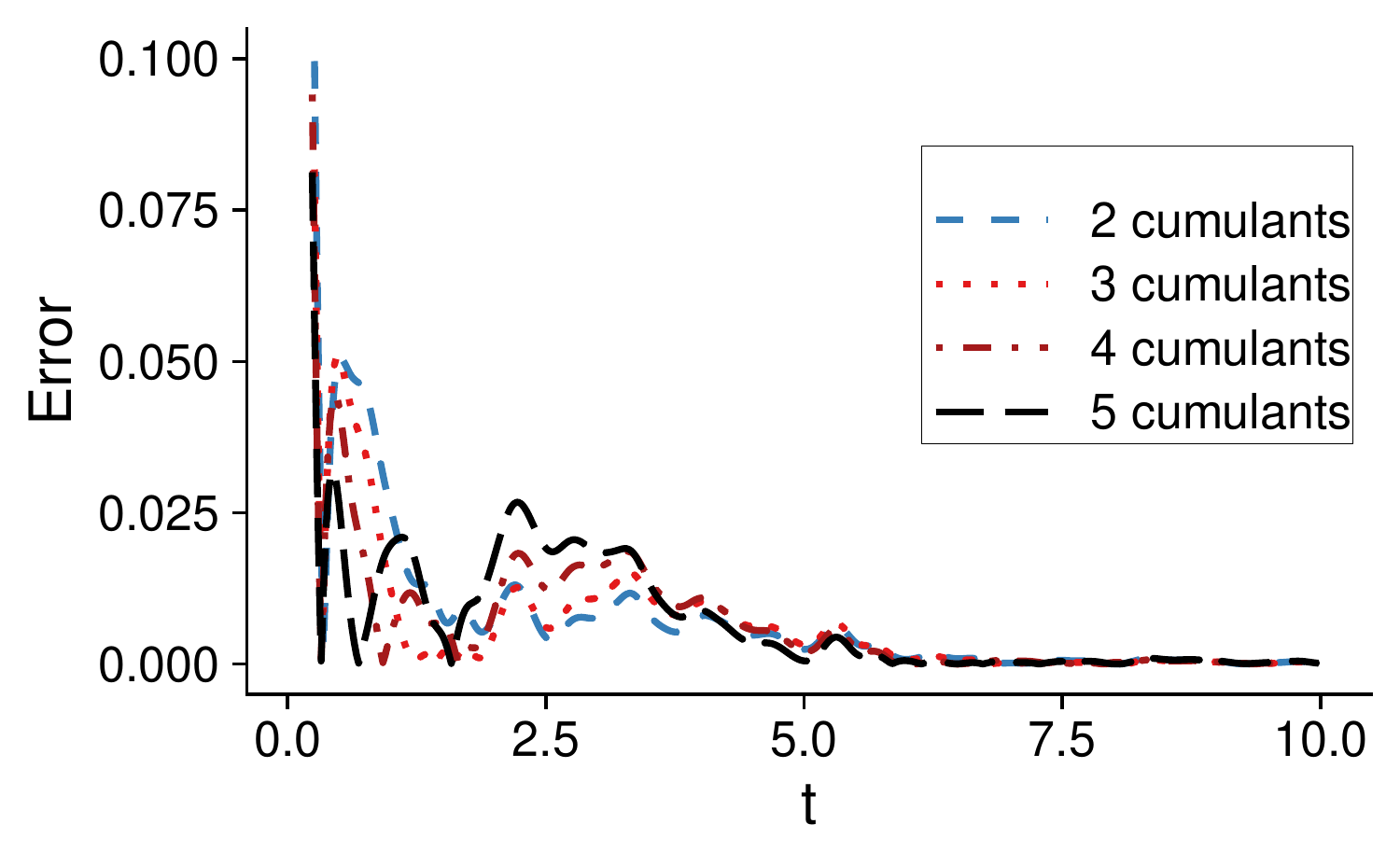}
\vspace*{-0.4cm}
\caption{TOP: Density of the first passage time through $S=1$ for the Feller process solution of Eq. \eqref{LIF} for $y_0=0.2$,  $c=0$, $\tau=1/1.5$, $\sigma=1$ and $\mu=0.9$. 
The curves are obtained from Eq. \eqref{approximationgen1} for $n=2,3,4,5$ (in the legend). The approximations are compared to the simulated FPT PDF (in solid-red).
The curve in red is built by simulation of $10^4$ first passage times of the Feller process obtained discretizing Eq. \eqref{LIF} by means of Eq. \eqref{mil_F1} with $\Delta t=10^{-2}$.
BOTTOM: The absolute error between the simulated PDF and the approximated one for the above cases. }
\end{center}\label{fig1}
\end{figure}

\subsection{Example 2: Neuronal modeling}
The Feller model was proved to fit experimental data of {\it in vitro} neurons under different conditions \citep{hopfner}.
For this reason in this subsection we  focus on an application to neuronal modeling of Eq. \eqref{LIF}.
The solution process $Y_t$ describes the evolution in time of the depolarization of the membrane potential of the neuron that is modelled 
as a leaky RC circuit with a drift characterizing the input stimuli.
Eq. \eqref{LIF}  describes the membrane depolarization 
until the occurence of a spike.
In accordance with the model, the spikes are generated when the process $Y_t$ crosses a voltage threshold $S$
for the first time, involving thus the FPT random variable.
The process is reset to the starting point $y_0$ after the spike and the evolution starts anew. 
In this framework
$y_0$ is the starting depolarization, $\sigma$ determines the amplitude of the noise, $c$ is the inhibitory reversal potential, $\tau$ is the inverse of the characteristic time constant of the neuron that takes into account the spontaneous voltage decay towards the resting potential in the absence of inputs and $\mu$  characterizes the input the neuron under consideration receives.
In the following, we consider the same parameters values used in  \cite{sac95},
the resetting potential is  equal to zero, i.e. $y_0=0$ mV,
the inhibitory reversal potential is fixed to $c=-10$ mV, the noise amplitude $\sigma=1.2$ $\sqrt{\mbox{mV}}/\sqrt{\mbox{ms}}$,  $\mu=3$  mV/ms and the firing threshold to $S=10$ mV.
The parameter of spontaneous decay is chosen $\tau=0.2$ ms (Figure \ref{fig_neur}).

\begin{figure}[htbp]
\begin{center}
\includegraphics[width=9cm]{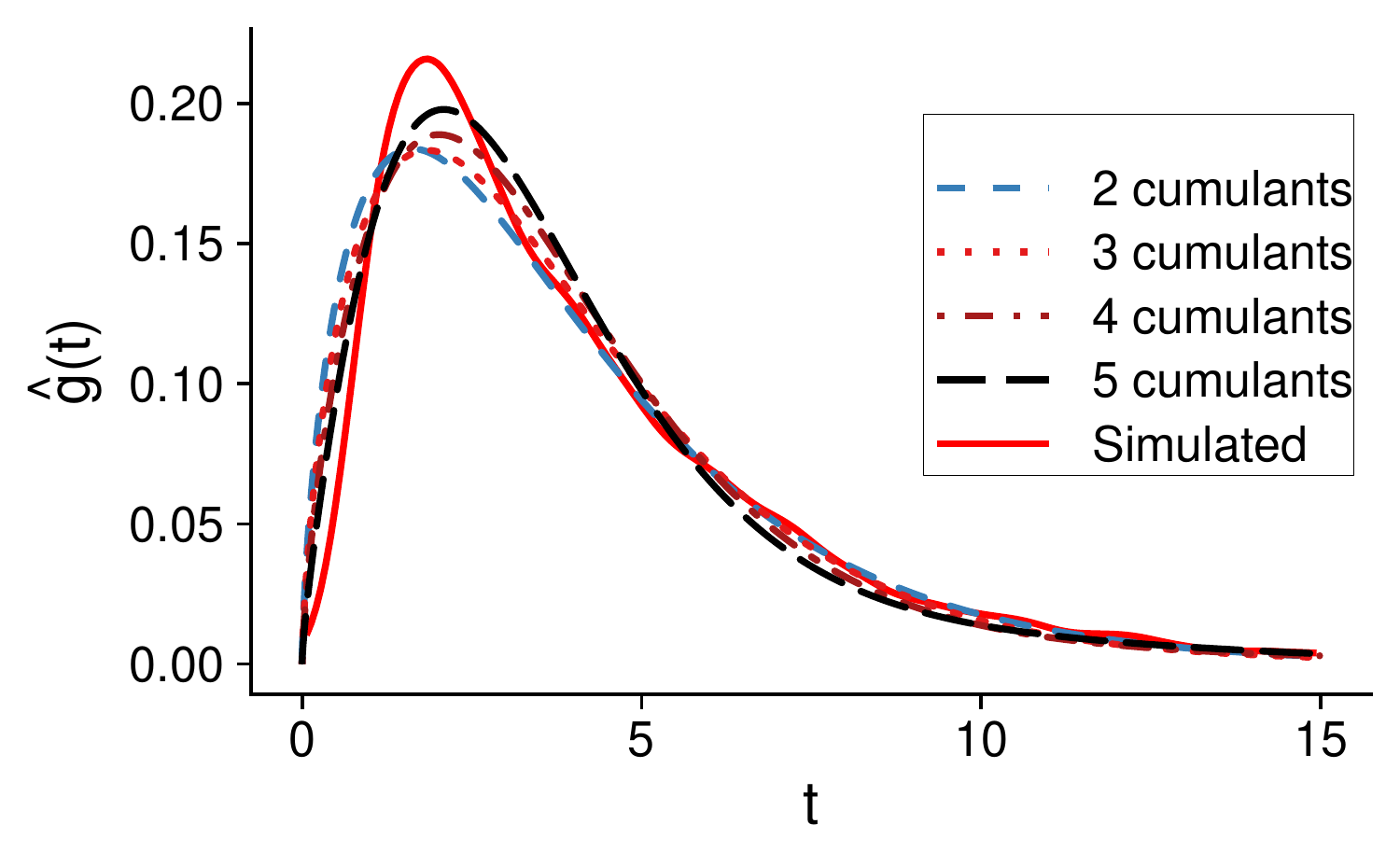}
\vspace*{-0.4cm}
\caption{Density of the first passage time through $S=10$ mV for the Feller neuronal model \eqref{LIF} for $y_0=0$ mV,  $c=-10$ mV, $\tau=0.2$ ms, $\sigma=1.2$ $\sqrt{\mbox{mV}}/\sqrt{\mbox{ms}}$ and $\mu=3$  mV/ms (the regime is suprathreshold). 
The curves are obtained from Eq. \eqref{approximationgen1} for $n=2,3,4,5$ (in the legend). The approximations are compared to the simulated FPT PDF (in solid-red).
}
\end{center}
\label{fig_neur}
\end{figure}

In \cite{dit2006} the noise amplitude is chosen $\sigma=2$ $\sqrt{\mbox{mV}}/\sqrt{\mbox{ms}}$ (Figure 5.3).
In this case the density is more skewed and the approximation fails to fit well the mode, altough the error remains of the order of $0.05$ (not shown).
\begin{figure}[htbp]
\begin{center}
\includegraphics[width=9cm]{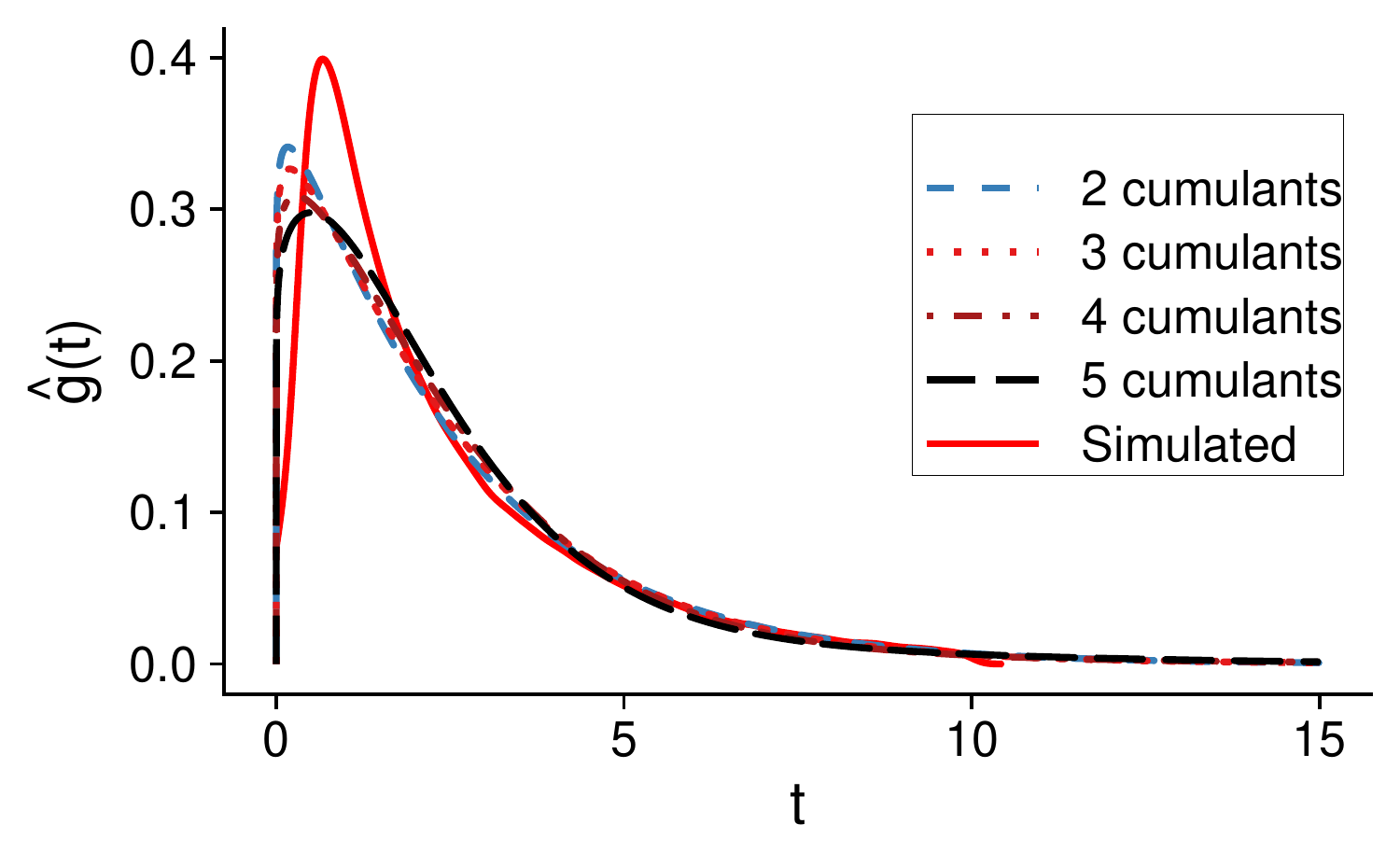}
\vspace*{-0.4cm}
\caption{Density of the first passage time  for the Feller neuronal model \eqref{LIF} for the same choice of parametrers of Figure \ref{fig_neur} except for $\sigma=2$ $\sqrt{\mbox{mV}}/\sqrt{\mbox{ms}}$ and $\mu=4$  mV/ms. The discretization step for the simulation is $\Delta t=10^{-3}$.
}
\end{center}
\label{fig_dit}
\end{figure}
The reason is in the properties of the gamma distribution that is our reference distribution: the mode is indeed not defined for $\alpha<1$,  ($\alpha=0.07$ in the example).

Another property of the gamma distribution to take care of is
the shape of the distribution for small $\alpha$. In fact in this case the gamma density stops to have the typical bell-shape, and $\varphi_{\alpha,\beta}(t)$ might fail a good approximation of $g(t)$ for small $t$.
This situation is presented in the following example.

\subsection{Example 3 : Financial mean-reverting models }
In  mathematical finance the Feller process goes under the name of CIR model  and it is used to
study the term structure of interest rates \citep{CIR} or mean-reverting models for a credit spread \citep{Linetsky}.
In the option pricing literature, the Feller process is used to describe the variance in models with stochastic volatility, where the most notable example is probably the Heston model \citep{heston,rouah}.
In this example we consider a stochastic model for
an instantaneous credit spread following Eq. \eqref{LIF} in $t\in[0.01,4]$ with the
long-run credit spread level of $200$ basis points $(\mu = 0.02\cdot 0.25)$, the initial spread level
of $100$bp ($y_0 = 0.01$), the rate of mean-reversion $\tau = 0.25$, and the volatility parameter $\sigma = 0.1$ (parameters given in \cite{Linetsky}). We
are interested in the first passage time density of the long-run level $S = 0.02$, starting from $y_0<S$.
Comparing the plot in Figure 5.4 with the one given by \cite{Linetsky} obtained with a different method of approximation, we observe a good asymptotic agreement.
We stress that the term asymptotic can be measliding since the agreement is good already for relatively small $t$.
 The PDF shape is not preserved for  $t \approx 0.01$. The reason is that for $\alpha<1$ ($\alpha=-0.34$ in this case)  the mode of the gamma distribution is not defined, and thus it cannot be matched with the one of $g(t)$, if it exists. 
However if we impose the teoretical information that $g(0)=0$, the behaviour of the PDF is reproduced.
Note that using Eq. \eqref{approximationgen1} we overcome the difficulties arisen from the simulation and the need to use $52$ terms in approximation expansion suggested by Linetsky.
\begin{figure}[htbp]
\begin{center}
\includegraphics[width=9cm]{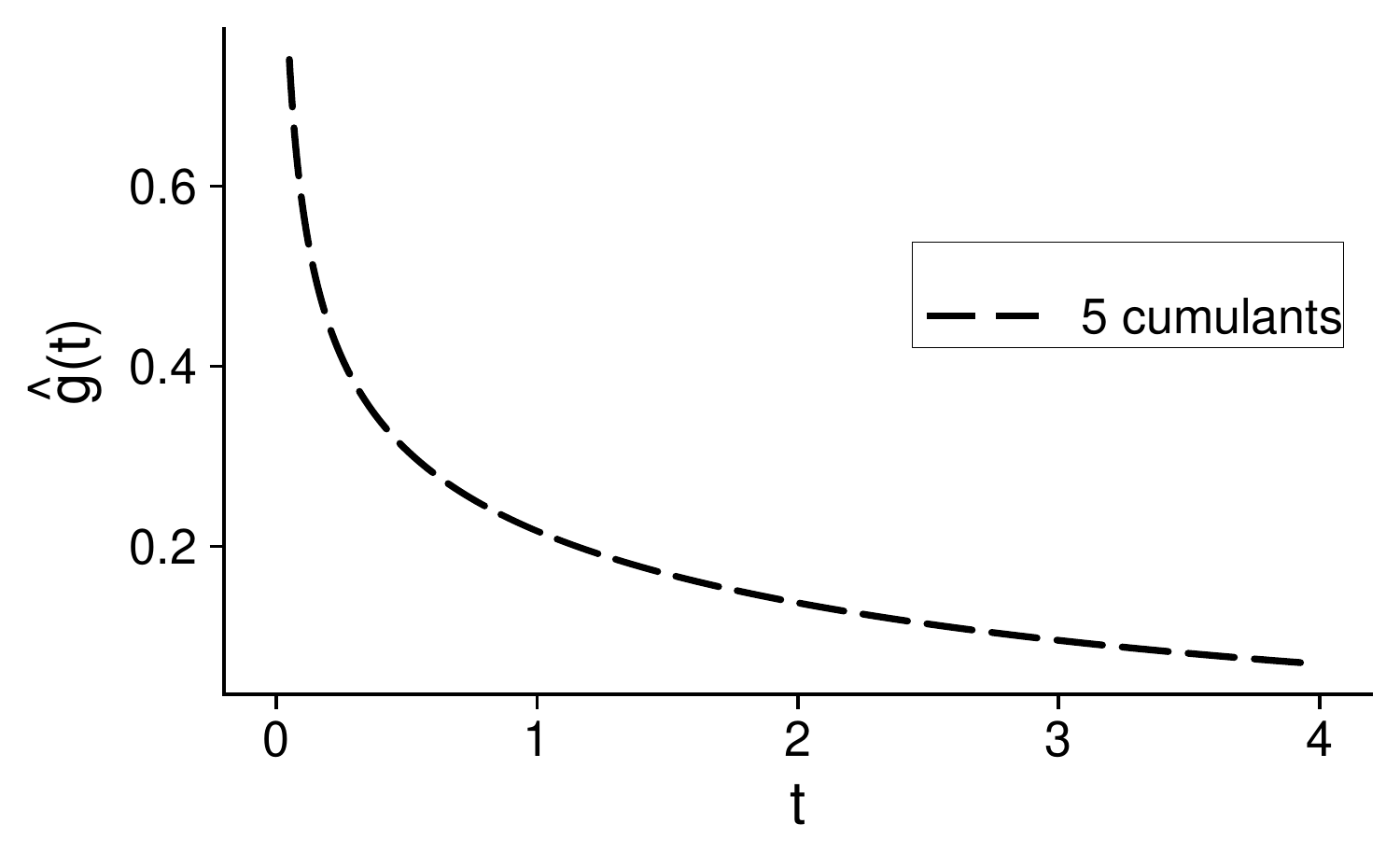}
\vspace*{-0.4cm}
\caption{Density of the FPT  for the CIR model \eqref{LIF} in the interval $t\in[0.01,4]$ as in \cite{Linetsky}, i.e. $\mu = 0.02\cdot 0.25$, $y_0 = 0.01$, $\tau = 0.25$, $\sigma = 0.1$ and  $S = 0.02$.
The curve is obtained from Eq. \eqref{approximationgen1} for $n=5$.
}
\end{center}
\label{fig_CIR}
\end{figure}

\section{Conclusions and open problems}
\label{s:disc}
We considered the well-known Feller stochastic process and the related FPT problem through a constant boundary. We provided a manageable closed form expression for the cumulants of $T$ of any order by which moments can be easily obtained, improving the current results available for the first three moments only. Note that the knowledge of higher moments gives qualitative information on the FPT PDF such as skewness, kurtosis, hyper-skewness and hyper-kurtosis.

We used cumulants to build a polynomial approximation of the FPT PDF $g(t),$ whose expression in closed form is still missing in the literature. The method is carried out involving the gamma distribution as a first approximation to $g(t)$ and then improving this approximation  by adding suitable correction terms based on a set of Laguerre polynomials. The resulting Laguerre-Gamma polynomial has coefficients whose computation was lightened by using the well-known recurrence relation of the Laguerre polynomials and the symbolic calculus. This computation was further simplified choosing the parameters of the gamma distribution with the method of moments. We have shown that the proposed method allows us to obtain good approximation of $g(t)$ even using a low degree ($5$ in the analyzed case-studies). Moreover it overcomes the difficulties arisen from the simulation for time $t$ close to zero. Some care must be taken when the PDF $g(t)$ is expected to have a mode differently from the  gamma distribution selected from the choice of its parameters. This circumstance deserves to be further investigated either in the choice of the parameters and in the expected properties of $g(t).$  Moreover, we give sufficient conditions to improve the approximation of the PDF $g(t)$ with the Laguerre-Gamma polynomial; criteria that are fulfilled in most cases of application. 

Future work includes the extension of this approach to other processes belonging to the class of Pearson's diffusion, since the expression of the Laplace transform of the FPT PDF for these processes is often written as a ratio of two hypergeometric functions. More in general, when the Laplace transform of the FPT PDF is a ratio of functions admitting a power series representation, cumulants might be recovered by using the algebra of formal power series and different polynomial approximations might be tested. For example if the transition PDF of the process has a power series representation of the Laplace transform $\widetilde{f}(z; x, y_0) = \int_{0}^{\infty}e^{-z t} f(x,t|y_0,0) {\rm d}t,$ thus the proposed method might be investigated as $\widetilde{g}(z)$
is again a ratio of power series, that is $\widetilde{g}(z)=\widetilde{f}(z; x, y_0)/\widetilde{f}(z; x, S).$

\appendix
\section{The Milstein method}\label{appn} 
To estimate the FPT PDF $g(t),$ we have implemented a classical Monte Carlo method and simulated the paths of $Y_t$ 
using  the stochastic differential equation \eqref{LIF}. The algorithm we refer relies on the Milstein scheme of 
discretization that is often used when the term $A_2$ of the SDE ${\rm d} Y_t = A_1(Y_t,t) {\rm d}t +\sqrt{A_2(Y_t,t)} {\rm d} W(t)$ depends on the process $Y_t$ (see for instance  \cite{KP}).
Truncation of the It\^{o}-Taylor expansion at the 
second order produces Milstein's method: 
\begin{eqnarray} \label{mil_F}
Y_{{n}}&=&Y_{n-1}+A_1(Y_{n-1})\Delta t+\sqrt{A_2(Y_{n-1})}\Delta W_{n-1} \nonumber \\
&&+{\frac  {1}{2}}\sqrt{A_2(Y_{n-1})}(\sqrt{A_2(Y_{n-1})})'\left[(\Delta W_{n-1})^{2}-\Delta t\right]
\end{eqnarray}
for $n=1,2,\ldots, N$ for some $N$. The Milstein scheme exhibits convergence of order $1$ in the strong sense and is a generalization of the Euler-Marayuma discretization scheme (the two methods coincide when $A_2(Y_t)$ does not depend on $Y_t$).
In case of Eq. \eqref{LIF}, Eq. \eqref{mil_F} gives
\begin{equation}\label{mil_F1}
Y_n=Y_{n-1}+\left(-\tau Y_{n-1}+\mu\right)\Delta t+\sigma \sqrt{Y_{n-1}-c} \ \Delta W_{n-1}+\frac{1}{4}\sigma^2\left[(\Delta W_{n-1})^{2}-\Delta t\right].
\end{equation}

\section*{Acknowledgements}
The second author was supported in part by Gruppo Nazionale per il Calcolo Scientifico (GNCS-INdAM).

\linespread{1}

%

\end{document}